\documentclass[]{article}
\begin{document}
\newtheorem{proposition}{Proposition}[section]
\newtheorem{definition}{Definition}[section]
\newtheorem{lemma}{Lemma}[section]

\title{\bf Basic Zero Divisor Mathematics (I)}
\author { Keqin Liu\\Department of Mathematics\\The University of British Columbia\\Vancouver, BC\\
Canada, V6T 1Z2}
\date{July, 2026}
\maketitle

\begin{abstract} We  develop refinements of  inner products, the Gram-Schmidt process,  root system, Hilbert space and 
Maschke's Theorem in the context of zero divisor mathematics.  We then introduce  
the concept of  respecting nilpotent step representations of Lie algebras and give the new refinement
 of Ado's Theorem.
\end{abstract}

\bigskip\bigskip
Zero divisor mathematics study the objects which are obtained from some well-known objects in mathematics  by replacing the vector spaces over fields with the modules over some commutative associative algebras with zero divisors. Among the commutative associative algebras which can be used to develop the theory of zero divisor mathematics, the simplest one is the dual real number algebra $R^{(2)}$  introduced by W. K. Clliford in 1873. 

\medskip
Replacing the ordinary differentiability based on the real linear transformations with the  $R^{(2)}$-differentiability
based on the $R^{(2)}$-module maps, we have developed  zero divisor calculus  which extend the basic theory of calculus on manifolds in \cite{S}.
In addition to the new objects in  zero divisor calculus (\cite{KL}),  there are many other new objects worthy to be noted in zero divisor mathematics. 
In this paper, we explain how to refine some well-known objects and theorems from a few different mathematics areas in the context of zero divisor mathematics.

\medskip
This paper consists of the following four sections and an appendix:

\bigskip 
Section 1\,\, Basic zero divisor linear algebra,

Section 2\,\, $R^{(2)}$-root system ,

Section 3\,\, Real Hilbert-like spaces,

Section 4\,\, Zero divisor representations of finite groups,

Appendix\,\, A class of  representations of non-semisimple Lie algebras.

\medskip
In section 1, after discussing  the  basic properties of the modules over the dual real number algebra
$R^{(2)}$, we extend the important concept of inner product, generalize the 
Gram-Schmidt process  and the orthonormal basis for vector spaces in the context of $R^{(2)}$-modules. 
In section 2, we introduce the concept of $R^{(2)}$-root system  based on two types of reflections and prove the existence of simple $R^{(2)}$-root system. In section 3, 
we initiate the study of real Hilbert-like spaces.  Our motivation of studying  Hilbert-like spaces comes from our belief
that there exists a class of Hilbert-like spaces such that the class of Hilbert-like spaces can replace Hilbert spaces
in the numerous applications of Hilbert spaces and the famous invariant subspace problem becomes a trivial problem  in the class of Hilbert-like spaces.  The main result in section 3 is the counterpart of the Riesz representation theorem in the context of  real Hilbert-like spaces.

\medskip
In section 4 and the appendix, we present two new ways of doing representation theory. The key ideal in our approach is to replace the linear transformations in current representation theory with the linear transformations which have non-trial invariant subspaces. There are two basic ways of finding  the linear transformations which have non-trial invariant subspaces to reconsider the current representation theory.
The first  way is to use the module maps over some commutative associative algebras with zero divisor, and the second way is to use the extension of the triangular linear transformations on
the graded vector space in \cite{KL1}.  We use the first way to introduce the concept of zero divisor representations of finite groups in section 4, and use the second way to introduce 
a class of  representations of non-semisimple Lie algebras and  present the new refinement  of Ado's theorem in the appendix. 

\bigskip
\section{Basic zero divisor linear algebra}

The {\bf dual real number algebra} $\mathcal{R}^{(2)}=\mathcal{R}1\oplus \mathcal{R}1^{\#}$  is the 
2-dimensional  associative real algebra, where  $\{1, \,1^{\#}\}$ is a basis 
for the 2-dimensional real vector space  $\mathcal{R}^{(2)}$ and  the associative product on 
$\mathcal{R}^{(2)}$ is defined by
$$
(x_1+x_2\,1^{\#})(y_1+y_2\,1^{\#})=x_1y_1+(x_1y_2+x_2y_1)\,1^{\#},
$$
where $x_1$, $x_2$, $y_1$, $y_2\in \mathcal{R}$. If 
$x=x_1+x_2\,1^{\#}\in \mathcal{R}^{(2)}$ with $x_1$, $x_2\in \mathcal{R}$,
then $Re\,x:=x_1$ and $Ze\,x:=x_2$ are called the {\bf  real part} and the  {\bf zero-divisor part} of $x$, respectively. The dual 
real number algebra $\mathcal{R}^{(2)}$ has many zero-divisors.
In fact, if $0\ne x\in \mathcal{R}^{(2)}$, then $x$ is a zero-divisor if and only if $Re\,x=0$,  and
$x$ is invertible if and only if $Re\,x\ne 0$. Moreover if $x$ is invertible,  then  the inverse $\displaystyle\frac{1}{x}:=x^{-1}$ of $x$ is given by
$\displaystyle\frac{1}{x}=x^{-1}=\displaystyle\frac{1}{Re\, x}-\displaystyle\frac{Ze\, x}{(Re\, x)^2}\,1^{\#}$. For convenience,  the product $x^{-1}y$ is also denoted by $\displaystyle\frac{y}{x}$, where $x$, $y\in \mathcal{R}^{(2)}$ and  $x$ is invertible.
The  dual real number algebra $\mathcal{R}^{(2)}$ is a normed algebra with respect to the following norm:
$$
||x||:=\sqrt{2\,(Re\,x)^2+(Ze\,x)^2}\quad\mbox{for $x\in\mathcal{R}^{(2)}$.}
$$

\medskip
A  $\mathcal{R}^{(2)}$-module $V$ has the following submodule chain:
$$\{0\}\subseteq Im\,1^{\#}_V \subseteq Ker\,1^{\#}_V\subseteq V,$$
where the submodules $Im\,1^{\#}_V $ and $ Ker\,1^{\#}_V$ are defined by
$$
Im\,1^{\#}_V:=\{ 1^{\#}\,v\,|\, v\in V\}, \qquad 
Ker\,1^{\#}_V:=\{ v\,|\, \mbox{$1^{\#}\,v=0$ and $v\in V$}\}.
$$
An element of a $\mathcal{R}^{(2)}$-module $V$ is  called a {\bf dual real vector} of $V$.  We say that a
  $\mathcal{R}^{(2)}$-module $V$ is a {\bf trivial} if $V=Ker\,1^{\#}_V$. Obviously, a trivial  $\mathcal{R}^{(2)}$-module is 
nothing more than a Euclidean space. 

\medskip
We now give the following generalization of the concept of bases in Euclidean space.

\medskip
\begin{definition}\label{def1} A subset $\beta$ of  a 
$\mathcal{R}^{(2)}$-module $V$ is called a 
{\bf $(\mathcal{R}^{(2)}, \mathcal{R})$-basis} for $V$ if $\beta$ has the following three properties:
\begin{description}
\item[(i)] $\beta=(S_1\,\|\, S_2):=S_1\cup S_2$, where $S_1\subseteq V\setminus  Ker\,1^{\#}_V$  and 
$S_2\subseteq   Ker\,1^{\#}_V\setminus Im\,1^{\#}_V$;
\item[(ii)]  $\beta=(S_1\,\|\, S_2)$ is  {\bf $(\mathcal{R}^{(2)}\,\|\, \mathcal{R})$-linearly independent}, i.e., for any 
finite subset $(\{v_i^{(2)}\}_{i=1}^s\,\|\, \{v_{s+j}\}_{j=1}^t)$ of $\beta$ with $\{v_i^{(2)}\}_{i=1}^s\subseteq S_1$ and 
$\{v_{s+j}\}_{j=1}^t\subseteq S_2$, we have
\begin{eqnarray*}
&&\sum_{i=1}^{s}\,c_i^{(2)}v_i^{(2)}+\sum_{j=1}^{t}\,c_{s+j}v_{s+j}=0,\,  \{c_i^{(2)}\}_{i=1}^{s}\subseteq \mathcal{R}^{(2)}, 
\{c_{s+j}\}_{j=1}^{t}\subseteq \mathcal{R}\\
&&\Longrightarrow c_i^{(2)}=0 \quad\mbox{for $1\le i\le s$}\quad\mbox{and}\quad  c_{s+j}=0 \quad\mbox{for $1\le j\le t$}\;
\end{eqnarray*}
\item[(iii)] For any dual real vector $v$ of $V$, $v$ is a 
{\bf $(\mathcal{R}^{(2)}\,\|\, \mathcal{R})$-linearly combination} of 
$\beta=(S_1\,\|\, S_2)$, i.e., 
there exist  non-negative integers $s$, $t$ and 
$\{c_i^{(2)}\}_{i=1}^{s}\subseteq \mathcal{R}^{(2)}$,  
$\{c_{s+j}\}_{j=1}^{t}\subseteq \mathcal{R}$, $ \{v_i^{(2)}\}_{i=1}^s\subseteq S_1$,
$\{v_{s+j}\}_{j=1}^t\subseteq S_2$ such that
 $ v=\sum_{i=1}^{s}\,c_i^{(2)}v_i^{(2)}+\displaystyle\sum_{j=1}^{t}\,c_{s+j}v_{s+j}$.
\end{description}
\end{definition}

\medskip
The first significant result for $\mathcal{R}^{(2)}$-modules is the following

\medskip
\begin{proposition}\label{pr1} Every $\mathcal{R}^{(2)}$-module has a 
$(\mathcal{R}^{(2)}, \mathcal{R})$-basis.
\end{proposition}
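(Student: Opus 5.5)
We write $N$ for $1^{\#}$ acting on $V$; since $(1^{\#})^2=0$, we have $N^2=0$. The plan is to build the basis in two layers, using real vector space bases and Zorn's lemma (or simply the existence of bases of arbitrary real vector spaces, which already uses the axiom of choice).

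\emph{First layer.} Consider the real vector space $V/Ker\,1^{\#}_V$. Choose a real basis of it and lift it to a subset $S_1\subseteq V\setminus Ker\,1^{\#}_V$. We claim that $\{Nv : v\in S_1\}$ is $\mathcal{R}$-linearly independent in $Im\,1^{\#}_V$ and spans it. Independence holds because $\sum a_i Nv_i=0$ gives $N(\sum a_i v_i)=0$, so $\sum a_i v_i\in Ker\,1^{\#}_V$, and hence all $a_i=0$ by the choice of the lift. Spanning holds because any $Nw$ depends only on $w$ modulo $Ker\,1^{\#}_V$.

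\emph{Second layer.} Choose a real linear complement of $Im\,1^{\#}_V$ inside $Ker\,1^{\#}_V$, and let $S_2$ be a real basis of that complement. Then $S_2\subseteq Ker\,1^{\#}_V\setminus Im\,1^{\#}_V$, since no nonzero element of a complement lies in $Im\,1^{\#}_V$, and $0\notin S_2$. This gives property (i).

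\emph{Spanning, property (iii).} Take $v\in V$. Write $v\equiv\sum a_i v_i \pmod{Ker\,1^{\#}_V}$ with $v_i\in S_1$. Then $u:=v-\sum a_i v_i\in Ker\,1^{\#}_V$. Decompose $u=y+\sum b_j w_j$ with $y\in Im\,1^{\#}_V$ and $w_j\in S_2$. By the first layer, $y=\sum d_i Nv_i$ for real $d_i$. Hence
\[
v=\sum (a_i+d_i 1^{\#})v_i+\sum b_j w_j,
\]
which is an $(\mathcal{R}^{(2)}\,\|\,\mathcal{R})$-linear combination.

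\emph{Independence, property (ii).} Suppose $\sum (a_i+d_i1^{\#})v_i+\sum b_jw_j=0$.
\begin{itemize}
\item Applying $N$ kills the $d_i$-terms and the $w_j$-terms, leaving $\sum a_iNv_i=0$. Independence of the $Nv_i$ then gives $a_i=0$.
\item The relation becomes $\sum d_iNv_i+\sum b_jw_j=0$. The first sum lies in $Im\,1^{\#}_V$ and the second lies in the chosen complement, so both sums vanish.
\item Independence of the $Nv_i$ gives $d_i=0$, and independence of $S_2$ gives $b_j=0$.
\end{itemize}

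The main point requiring care is the first layer: the lift of a basis of $V/Ker\,1^{\#}_V$ must yield a basis of $Im\,1^{\#}_V$ under $N$. This is exactly the isomorphism $V/Ker\,1^{\#}_V\cong Im\,1^{\#}_V$ induced by $N$. Everything else is routine bookkeeping. In the trivial case $V=Ker\,1^{\#}_V$, the construction gives $S_1=\emptyset$ and $S_2$ a real basis of $V$, consistent with the remark that a trivial $\mathcal{R}^{(2)}$-module is a Euclidean space.
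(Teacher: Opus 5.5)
Your proof is correct and takes essentially the paper's approach: the paper's entire proof is the remark that the statement follows from the existence of bases for vector spaces. You make that explicit with two choices of real bases. The first is a real basis of $V/Ker\,1^{\#}_V$, lifted to $S_1$, whose image under $1^{\#}$ is a real basis of $Im\,1^{\#}_V$. The second is a real basis $S_2$ of a complement of $Im\,1^{\#}_V$ in $Ker\,1^{\#}_V$. Your checks of properties (i)--(iii) are complete.
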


\medskip
\noindent
{\bf Proof} This proposition follows from the existence of a basis for a vector space.

\hfill\raisebox{1mm}{\framebox[2mm]{}}

\bigskip 
{\bf Remark} A corollary of Proposition \ref{pr1} is that if $V$ is  $\mathcal{R}^{(2)}$-module, $u\in V$, $ 0\ne r^{(2)}\in \mathcal{R}^{(2)}$
and $ r^{(2)}u=0$, then
$u\left\{\begin{array}{ll} =0& \mbox{if $Re\,r^{(2)}\ne 0$,} \\\in Ker\,1^{\#}_V &\mbox{if $Re\,r^{(2)}= 0$.} \end{array}\right.$

\bigskip 
If a $\mathcal{R}^{(2)}$-module $V$ has a finite $(\mathcal{R}^{(2)}, \mathcal{R})$-basis
$(\{v_i\}_{i=1}^n\,\|\,\{v_{n+j}\}_{j=1}^m)$, then $V$ is called {\bf finite dimensional}, the 
unique pair $(n, m)$ of non-negative integers is called the $\mathcal{R}^{(2)}$-{\bf dimension} of $V$ 
and is denoted by 
$\dim_{\mathcal{R}^{(2)}}(V)=(n,\,m)$. 

\medskip 
Let  $n$ and $m$ be non-negative integers with $n+m\ne 0$. Let
$$\mathcal{R}^{(2)n,m}:=
\left\{(x^1,  \dots ,  x^n\,| \,x^{n+1}1^{\#},  \dots ,  x^{n+m}1^{\#})\left |
\begin{array}{c}
x^1, \dots, x^n\in \mathcal{R}^{(2)}\\ 
x^{n+1},  \dots ,  x^{n+m}\in \mathcal{R}\end{array}\right.\right\}.$$
Then $\mathcal{R}^{(2)n,m}$  is a  $\mathcal{R}^{(2)}$-module with the operation of coordinate-wise addition and scalar multiplication. An element in $\mathcal{R}^{(2)n,m}$ is called a {\bf real $(n, m)$-vector}, and  
$\mathcal{R}^{(2)n,m}$ is called 
the {\bf real $(n, m)$-dimensional $\mathcal{R}^{(2)}$-module}. Let
\begin{eqnarray*}
e_1^{(2)}:&=&(\underbrace{1, 0, \dots ,  0, 0}_{n}\,| \,\underbrace{0, 0, \dots , 0, 0}_{m})\\
&\vdots&\\
e_n^{(2)}:&=&(\underbrace{0, 0, \dots ,  0, 1}_{n}\,| \,\underbrace{0, 0, \dots , 0, 0}_{m})\\
e_{n+1}:&=&( \underbrace{0, 0\dots ,  0, 0}_{n}\,| \,\underbrace{1^{\#}, 0, \dots , 0, 0}_{m})\\
&\vdots&\\
e_{n+m}:&=&( \underbrace{0, 0\dots ,  0, 0}_{n}\,| \,\underbrace{0, 0, \dots , 0, 1^{\#}}_{m}).
\end{eqnarray*}
Then $(\{e_i^{(2)}\}_{i=1}^n\,\|\, \{e_{n+j}\}_{j=1}^m)$ is a 
$(\mathcal{R}^{(2)}, \mathcal{R})$-basis, which is called  the {\bf standard $(\mathcal{R}^{(2)}, \mathcal{R})$-basis} for the  real $(n, m)$-module $\mathcal{R}^{(2)n, m}$.  It is easy to check that a $\mathcal{R}^{(2)}$-module $V$ is isomorphic to
 $\mathcal{R}^{(2)n, m}$ if and only if $\dim_{\mathcal{R}^{(2)}}(V)=(n,\,m)$.

\bigskip
Let $a\in \mathcal{R}^{(2)}$. A dual real number $x\in\mathcal{R}^{(2)}$ is called a {\bf square root} of $a$ if $x^2=a$. The square root of $a$ with positive real part is called {\bf positive square root} of $a$ and is denoted by $\sqrt{a}$. Clearly, 
the  positive square root $\sqrt{a}$ of $a$ exists if and only if $Re\,a>0$, and 
$\sqrt{a}=\sqrt{Re\, a}+\displaystyle\frac{Ze\, a}{2\sqrt{Re\, a}}\,1^{\#}$.

\bigskip
One of the important concepts in zero divisor linear algebra is  the following generalization of the ordinary inner product on a real vector space.

\medskip
\begin{definition}\label{def1.2} Let $V$ be a left $\mathcal{R}^{(2)}$-module. 
A {\bf real Hu-Liu inner product} on $V$ is a function
$\kappa: V\times V\to \mathcal{R}^{(2)}$ such that for all $x$, $y$, and $z\in V$ and each dual real number $a\in \mathcal{R}^{(2)}$, the following five properties hold:
\begin{description} 
\item[(i)] $\kappa (x+y, z)=\kappa (x, z)+\kappa (y, z)$.
\item[(ii)] $\kappa (ax, y)=a\,\kappa (x, y)$.
\item[(iii)]  $\kappa (x, y)=\kappa (y, x)$.
\item[(iv)] $Re\,\kappa (x, x)\,\ge 0$, and $Re\,\kappa (x, x)\,= 0$ if and only if 
$x\in Ker\,1^{\#}_V$.
\item[(v)]  $Ze\,\kappa (x, x)\,\ge 0$ for  $x\in Ker\,1^{\#}_V$, and $Ze\,\kappa (x, x)= 0$ if and only if 
$x\in Im\,1^{\#}_V$.
\end{description}
\end{definition}

\bigskip
A  $\mathcal{R}^{(2)}$-module $V$ endowed with a specific real Hu-Liu inner product $\kappa (\,, \,)$ is called a {\bf real Hu-Liu inner product  $\mathcal{R}^{(2)}$-module} and is denoted by $(V,  \kappa (\,, \,))$.

\medskip
\begin{definition}\label{def1.3} Let $(V,  \kappa (\,, \,))$ be a real Hu-Liu inner product   
$\mathcal{R}^{(2)}$-module. 
\begin{description}
\item[(i)] Two dual real vectors $x$ and $y$ in $V$ are  
{\bf $\mathcal{R}^{(2)}$-orthogonal} if $\kappa (x, y)=0$.  
\item[(ii)] A subset $S$ of $V$ is 
{\bf } {\bf $\mathcal{R}^{(2)}$-orthogonal} if any two distinct dual real vectors in $S$ are 
$\mathcal{R}^{(2)}$-orthogonal. 
\item[(iii)] A dual real vectors $x$ in $V$  is a {\bf $\mathcal{R}^{(2)}$-unit 
vector} if 
$$\kappa (x, x)=\left\{\begin{array}{ll}1&\mbox{for $x\in V\setminus Ker\,1^{\#}_V$,}\\
1^{\#}&\mbox{for $x\in Ker\,1^{\#}_V\setminus Im\,1^{\#}_V$}.\end{array}\right.$$
\item[(iv)] A subset $S$ of $V$ is {\bf $\mathcal{R}^{(2)}$-orthonormal} if $S$ is orthogonal and consists  entirely of $\mathcal{R}^{(2)}$-unit dual real 
vectors.
\end{description}
\end{definition}

\bigskip
It is easy the check that the function 
$\kappa_{n,m}: \mathcal{R}^{(2)n,m}\times \mathcal{R}^{(2)n,m}\to \mathcal{R}^{(2)}$ defined by
\begin{eqnarray}\label{eq1}
&&\kappa_{n,m} (x, y):=\displaystyle\sum_{i=1}^n x^iy^i+
1^{\#}\displaystyle\sum_{j=1}^m  x^{n+j}y^{n+j}\nonumber\\
&=&\displaystyle\sum_{i=1}^n x^{i1}y^{i1}+
1^{\#}\Big(\displaystyle\sum_{i=1}^n(x^{i1}y^{i2}+x^{i2}y^{i1})+\displaystyle\sum_{j=1}^m  x^{n+j}y^{n+j}\Big)
\end{eqnarray}
is a Hu-Liu inner product on $\mathcal{R}^{(2)n,m}$, where $x$ and $y$ are given by 
\begin{equation}\label{eq2}
x=\displaystyle\sum_{i=1}^n (\underbrace{x^{i1}+1^{\#}x^{i2}}_{x^i})e_i^{(2)}+
\sum_{j=1}^m\,x^{n+j}e_{n+j}\in \mathcal{R}^{(2)n,m},
\end{equation}
\begin{equation}\label{eq3}
y=\displaystyle\sum_{i=1}^n (\underbrace{y^{i1}+1^{\#}y^{i2}}_{y^i})e_i^{(2)}+
\sum_{j=1}^m\,y^{n+j}e_{n+j}\in \mathcal{R}^{(2)n,m},
\end{equation}
where  $x^{i1}$, $x^{i2}$, $y^{i1}$, $y^{i2}$, $x^{n+j}$,  $y^{n+j}\in \mathcal{R}$ with
$1\le i\le n$ and $1\le j\le m$,  and $(\{e_i^{(2)}\}_{i=1}^n\,\|\, \{e_{n+j}\}_{j=1}^m)$ is  the standard 
$(\mathcal{R}^{(2)}, \mathcal{R})$-basis for the  real $(n, m)$-module $\mathcal{R}^{(2)n, m}$.
The Hu-Liu inner product $\kappa_{n,m} (\, , \,)$ defined by (\ref{eq1}) is called the {\bf standard Hu-Liu inner product} on the
 real $(n, m)$-dimensional $\mathcal{R}^{(2)}$-module  $\mathcal{R}^{(2)n,m}$.

\medskip
Our extension of the well-known Gram-Schmidt process is given in the following
 
\medskip
\begin{proposition}\label{pr1.3} Let $(V,  \kappa (\,, \,))$ be a real Hu-Liu inner product   
$\mathcal{R}^{(2)}$-module. If $\beta=(\{w_i^{(2)}\}_{i=1}^n\,\|\,\{w_{n+j}\}_{j=1}^m)$ is a 
$(\mathcal{R}^{(2)}\,\|\, \mathcal{R})$-linearly independent subset of a $\mathcal{R}^{(2)}$-module $V$, there exists a 
 $\mathcal{R}^{(2)}$-orthonormal set $\gamma=(\{v_i^{(2)}\}_{i=1}^n\,\|\,\{v_{n+j}\}_{j=1}^m)$ of $V$ such that 
 $span^\#(\gamma)=span^\#(\beta)$, where $span^\#(\beta)$ is the 
{\bf $\mathcal{R}^{(2)}$-span} of the set $\beta$ and is defined by
$$
span^\#(\beta)=\sum_{i=1}^n  \mathcal{R}^{(2)}w_i^{(2)} +\sum_{j=1}^m \mathcal{R}^{(2)}w_{n+j}
=\sum_{i=1}^n  \mathcal{R}^{(2)}w_i^{(2)} +\sum_{j=1}^m \mathcal{R}w_{n+j}
$$
for $\beta=(\{w_i^{(2)}\}_{i=1}^n\,\|\,\{w_{n+j}\}_{j=1}^m)$.
\end{proposition}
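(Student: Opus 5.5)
I would run the ordinary Gram--Schmidt process in two stages: first on the $\mathcal{R}^{(2)}$-part $\{w_i^{(2)}\}_{i=1}^n$, using division by dual real numbers with nonzero real part, and then on the $\mathcal{R}$-part $\{w_{n+j}\}_{j=1}^m$, using real division by zero-divisor parts of $\kappa$. The induction is on $k$, producing $\{v_1^{(2)},\dots,v_k^{(2)}\}$ orthonormal with $\sum_{i\le k}\mathcal{R}^{(2)}v_i^{(2)}=\sum_{i\le k}\mathcal{R}^{(2)}w_i^{(2)}$.

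\textbf{Stage 1.} Given $v_1^{(2)},\dots,v_{k-1}^{(2)}$, set
$$u_k=w_k^{(2)}-\sum_{i<k}\kappa(w_k^{(2)},v_i^{(2)})\,v_i^{(2)}.$$
By (i)--(iii) of Definition \ref{def1.2} and $\kappa(v_i^{(2)},v_i^{(2)})=1$, the vector $u_k$ is orthogonal to each $v_i^{(2)}$. The key point is that $u_k\notin Ker\,1^{\#}_V$. If $1^{\#}u_k=0$, then expanding expresses $1^{\#}w_k^{(2)}$ as an $\mathcal{R}^{(2)}$-combination of $w_1^{(2)},\dots,w_{k-1}^{(2)}$, since the $v_i^{(2)}$ lie in the span of these. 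This is a nontrivial relation with coefficient $1^{\#}\neq0$ on $w_k^{(2)}$, which contradicts the $(\mathcal{R}^{(2)}\|\mathcal{R})$-linear independence. By (iv), $Re\,\kappa(u_k,u_k)>0$, so the positive square root $\sqrt{\kappa(u_k,u_k)}$ exists and is invertible. Define $v_k^{(2)}=u_k/\sqrt{\kappa(u_k,u_k)}$; then $\kappa(v_k^{(2)},v_k^{(2)})=1$. The spans agree because the transformation is unitriangular up to invertible scalars.

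\textbf{Stage 2.} For $w_{n+j}$, set
$$u_{n+j}=w_{n+j}-\sum_{i\le n}\kappa(w_{n+j},v_i^{(2)})v_i^{(2)}-\sum_{l<j}c_l\,v_{n+l}.$$
Here $c_l=Ze\,\kappa(w_{n+j},v_{n+l})$, a real number. Since $v_{n+l}\in Ker\,1^{\#}_V$, we have $\kappa(w,v_{n+l})=\kappa(w,1^{\#}\cdot(\text{preimage}))$, which is a multiple of $1^{\#}$ whenever $v_{n+l}$ is killed by $1^{\#}$: indeed $1^{\#}\kappa(w,v_{n+l})=\kappa(w,1^{\#}v_{n+l})=0$, so $Re\,\kappa(w,v_{n+l})=0$. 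Hence $\kappa(w,v_{n+l})=c_l1^{\#}$, and with $\kappa(v_{n+l},v_{n+l})=1^{\#}$ we get orthogonality.

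Next I need $u_{n+j}\in Ker\,1^{\#}_V\setminus Im\,1^{\#}_V$. For membership in $Ker\,1^{\#}_V$, note that $1^{\#}u_{n+j}=-\sum_i 1^{\#}\kappa(w_{n+j},v_i^{(2)})v_i^{(2)}$. Orthogonality of $u_{n+j}$ to $v_i^{(2)}$ and pairing with $v_i^{(2)}$ then force these coefficients' real parts, $Re\,\kappa(w_{n+j},v_i^{(2)})$, to vanish, so $1^{\#}u_{n+j}=0$. For exclusion from $Im\,1^{\#}_V$, suppose $u_{n+j}=1^{\#}y$. Writing $y$ via Proposition \ref{pr1}-type reasoning against the span shows that $w_{n+j}$ is a combination of earlier elements plus $1^{\#}(\cdot)$. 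I would then use independence, together with $w_{n+j}\notin Im\,1^{\#}_V$ relative to the span, to reach a contradiction.

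This last step is the main obstacle. Independence as defined only forbids relations with real coefficients on the $S_2$-part, so I must carefully show that $u_{n+j}\in Im\,1^{\#}_V$ yields a forbidden relation. I would first try pairing with $\kappa$ and applying (v) to obtain $Ze\,\kappa(u_{n+j},u_{n+j})>0$. Failing that, I would argue by the independence of the images in $Ker\,1^{\#}_V/Im\,1^{\#}_V$. Once $Ze\,\kappa(u_{n+j},u_{n+j})>0$ is established, set
$$v_{n+j}=u_{n+j}\big/\sqrt{Ze\,\kappa(u_{n+j},u_{n+j})},$$
which gives $\kappa(v_{n+j},v_{n+j})=1^{\#}$. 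The span equality follows as in Stage 1, completing the induction.
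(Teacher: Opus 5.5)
Your Stage~1 is sound. In Stage~2, the orthogonality computations and the proof that $u_{n+j}\in Ker\,1^{\#}_V$ are fine. The gap is the step you flag yourself, $u_{n+j}\notin Im\,1^{\#}_V$, and it cannot be closed from the stated hypotheses.

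Your first idea is circular: axiom (v) gives $Ze\,\kappa(u_{n+j},u_{n+j})>0$ only once you already know $u_{n+j}\notin Im\,1^{\#}_V$. Your fallback, the $\mathcal{R}$-independence of the classes of $w_{n+1},\dots,w_{n+m}$ in $Ker\,1^{\#}_V/Im\,1^{\#}_V$, does not follow from $(\mathcal{R}^{(2)}\,\|\,\mathcal{R})$-linear independence. Here is a concrete counterexample:
\begin{itemize}
\item[] In $\mathcal{R}^{(2)1,1}$ with $\kappa_{1,1}$, take $\beta=(\emptyset\,\|\,\{w_1,w_2\})$ with $w_1=e_2$ and $w_2=1^{\#}e_1^{(2)}+e_2$.
\item[] Both vectors lie in $Ker\,1^{\#}_V\setminus Im\,1^{\#}_V$, and $c_1w_1+c_2w_2=0$ with $c_1,c_2\in\mathcal{R}$ forces $c_1=c_2=0$. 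Yet $w_2-w_1\in Im\,1^{\#}_V$.
\item[] Your Stage~2 gives $v_1=e_2$, $c_1=Ze\,\kappa(w_2,e_2)=1$ and $u_2=1^{\#}e_1^{(2)}\in Im\,1^{\#}_V$. You would then divide by $Ze\,\kappa(u_2,u_2)=0$.
\item[] No other construction rescues this. We have $span^{\#}(\beta)=Ker\,1^{\#}_V$, and on it $\kappa_{1,1}(a1^{\#}e_1^{(2)}+be_2,\,a'1^{\#}e_1^{(2)}+b'e_2)=bb'1^{\#}$, which has rank one. So it contains no two orthogonal vectors $v$ with $\kappa(v,v)=1^{\#}$.
\end{itemize}
With the paper's definition of independence, the proposition as written is false.

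What is missing is a hypothesis: $span^{\#}(\beta)$ must be pleasant in the sense of Definition~\ref{def1.4}.
\begin{itemize}
\item[] Given independence, this is equivalent to the $\mathcal{R}$-independence of the classes of the $w_{n+j}$ in $Ker\,1^{\#}_V/Im\,1^{\#}_V$.
\item[] It is also necessary: the span of any $\mathcal{R}^{(2)}$-orthonormal $\gamma$ is pleasant. If $x=\sum a_iv_i^{(2)}+\sum b_jv_{n+j}=1^{\#}y$, pairing with $v_k^{(2)}$ gives $Re\,a_k=0$. Pairing with $v_{n+l}$ gives $b_l1^{\#}=\kappa(y,1^{\#}v_{n+l})=0$. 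Hence $x\in 1^{\#}span^{\#}(\gamma)$.
\item[] It holds automatically when $\beta$ extends to an $(\mathcal{R}^{(2)},\mathcal{R})$-basis of $V$. That is the situation in which the result is used, via Propositions~\ref{pr1.4} and~\ref{pr1.5}.
\end{itemize}

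Under this hypothesis your argument closes. Since $u_{n+j}\in span^{\#}(\beta)$, the assumption $u_{n+j}\in Im\,1^{\#}_V$ would force $u_{n+j}\in 1^{\#}span^{\#}(\beta)=\sum_i\mathcal{R}\,1^{\#}w_i^{(2)}$. Unwinding the definition of $u_{n+j}$ then produces a relation with coefficient $1$ on $w_{n+j}$, because its correction terms involve the earlier $w_{n+l}$ only with real coefficients. This contradicts independence. Axiom (v) then yields $Ze\,\kappa(u_{n+j},u_{n+j})>0$, and your normalization goes through. You should state the extra hypothesis explicitly rather than try to derive it.
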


\medskip
\noindent
{\bf Proof}  See \cite{KL} for the poof.

\hfill\raisebox{1mm}{\framebox[2mm]{}}

\medskip
The process of constructing the $\mathcal{R}^{(2)}$-orthogonal set $\gamma$ from the 
$(\mathcal{R}^{(2)}\,\|\, \mathcal{R})$-linearly independent $\beta$ in Proposition \ref{pr1.3} is called 
the {\bf Hu-Liu extension of the Gram-Schmidt process}.

\medskip
\begin{definition}\label{def1.4} A 
$\mathcal{R}^{(2)}$-submodule W of a $\mathcal{R}^{(2)}$-module $V$ is said to be 
{\bf pleasant} if $W\bigcap Im\,1^{\#}_V=Im\,1^{\#}_W$, where 
$Im\,1^{\#}_W:=\{1^{\#}\,x\,|\, x\in W\}.$
\end{definition}

\medskip
Since $V\bigcap Im\,1^{\#}_V=Im\,1^{\#}_V$ and $\{0\}\bigcap Im\,1^{\#}_V=\{0\}=Im\,1^{\#}_{\{0\}}$, both $\{0\}$ and $V$ are 
pleasant submodules of a $\mathcal{R}^{(2)}$-module $V$. If $V$ is a {\bf non-trivial} $\mathcal{R}^{(2)}$-module, i.e., 
$Im\,1^{\#}_V\ne \{0\}$, then both $Ker\,1^{\#}_V$ and $Im\,1^{\#}_V$ are not pleasant submodules because
$$
\clubsuit\bigcap Im\,1^{\#}_V= Im\,1^{\#}_V\ne \{0\}=Im\,1^{\#}_{\clubsuit}
\quad\mbox{for $\clubsuit\in \{Ker\,1^{\#}_V, \,\, Im\,1^{\#}_V$\}.}
$$

\medskip
The next proposition gives a class of  pleasant submodules of a $\mathcal{R}^{(2)}$-module.

\medskip
\begin{proposition}\label{pr1.2}  if $x\ne 0$ is a non-zero element of a $\mathcal{R}^{(2)}$-module $V$, then the $\mathcal{R}^{(2)}$-submodules
$\mathcal{R}^{(2)}x$ of $V$ is pleasant if and only if $x\not\in  Im\,1^{\#}_V$.
\end{proposition}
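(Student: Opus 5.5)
We prove both directions directly from Definition \ref{def1.4}, using only that $1^{\#}1^{\#}=0$ and that every element of $\mathcal{R}^{(2)}$ has the form $a+b\,1^{\#}$ with $a,b\in\mathcal{R}$. Set $W=\mathcal{R}^{(2)}x$. Then $Im\,1^{\#}_W=\{1^{\#}(a+b1^{\#})x\}=\{a\,1^{\#}x \mid a\in\mathcal{R}\}=\mathcal{R}\,1^{\#}x$. The inclusion $Im\,1^{\#}_W\subseteq W\cap Im\,1^{\#}_V$ always holds, so pleasantness amounts to the reverse inclusion $W\cap Im\,1^{\#}_V\subseteq \mathcal{R}\,1^{\#}x$.

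\emph{Necessity.} Suppose $x\in Im\,1^{\#}_V$ and $x\neq 0$. Then $x\in W\cap Im\,1^{\#}_V$. On the other hand, $x=1^{\#}y$ gives $1^{\#}x=1^{\#}1^{\#}y=0$, so $Im\,1^{\#}_W=\{0\}$. Since $x\ne 0$, we get $W\cap Im\,1^{\#}_V\neq Im\,1^{\#}_W$, and $W$ is not pleasant.

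\emph{Sufficiency.} Suppose $x\notin Im\,1^{\#}_V$, and let $w=(a+b1^{\#})x\in W\cap Im\,1^{\#}_V$, say $w=1^{\#}y$. We want to show $a=0$; then $w=b\,1^{\#}x\in Im\,1^{\#}_W$. Assume instead $a\neq 0$. Then $a+b1^{\#}$ is invertible, and
\[
x=(a+b1^{\#})^{-1}1^{\#}y=1^{\#}\big((a+b1^{\#})^{-1}y\big)\in Im\,1^{\#}_V,
\]
which is a contradiction. Hence $a=0$ and the reverse inclusion holds.

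The only point needing care is this sufficiency step: the invertibility of elements with nonzero real part, noted earlier in the paper, together with commutativity of $\mathcal{R}^{(2)}$, lets us move $1^{\#}$ past the inverse. The rest is bookkeeping. Note also that the hypothesis $x\ne 0$ is used only in the necessity direction.
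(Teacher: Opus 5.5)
Your proof is correct, and your necessity direction is the same as the paper's. For sufficiency, the paper splits into two cases.
\begin{itemize}
\item When $x\notin Ker\,1^{\#}_V$, it applies $1^{\#}$ to $1^{\#}v=a^{(2)}x$ to get $(Re\,a^{(2)})(1^{\#}x)=0$. It then concludes $Re\,a^{(2)}=0$ because $1^{\#}x\neq 0$.
\item When $x\in Ker\,1^{\#}_V\setminus Im\,1^{\#}_V$, it notes $\mathcal{R}^{(2)}x=\mathcal{R}x$ and uses your contradiction argument with a nonzero real scalar in place of $a+b1^{\#}$. This gives $\mathcal{R}^{(2)}x\cap Im\,1^{\#}_V=\{0\}$.
\end{itemize}

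You run that second argument with a general dual coefficient, using that $a+b1^{\#}$ is invertible when $a\neq 0$ and commutes with $1^{\#}$. This removes the need for the case split. The paper's first-case argument relies on the stronger condition $1^{\#}x\neq 0$, so it cannot cover the second case. Yours uses only $x\notin Im\,1^{\#}_V$ and handles both at once.

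Nothing is lost by unifying. Your identity $W\cap Im\,1^{\#}_V=\mathcal{R}\,1^{\#}x$ still recovers the two situations the paper separates: a nonzero line when $x\notin Ker\,1^{\#}_V$, and $\{0\}$ otherwise.

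Your remark that $x\neq 0$ is used only for necessity is also right. For $x=0$ the submodule $\{0\}$ is pleasant even though $0\in Im\,1^{\#}_V$.
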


\medskip
\noindent
{\bf Proof} If $x\in Im\,1^{\#}_V$, then 
$
\mathcal{R}^{(2)}x\bigcap Im\,1^{\#}_V=\mathcal{R}^{(2)}x\ne \{0\}= Im\,1^{\#}_{\mathcal{R}^{(2)}x},
$
which proves that  the $\mathcal{R}^{(2)}$-submodule $\mathcal{R}^{(2)}x$ is not  pleasant. 

\medskip
Conversely, if $x\not\in  Im\,1^{\#}_V$, then
either $x\not\in Ker\,1^{\#}_V$ or $x\in Ker\,1^{\#}_V\setminus  Im\,1^{\#}_V$. In the case where $x\not\in Ker\,1^{\#}_V$, we have
\begin{eqnarray*}
&& 1^{\#}v \in \mathcal{R}^{(2)}x\bigcap Im\,1^{\#}_V\quad\mbox{for some $v\in V$}\\
&\Rightarrow&  1^{\#}v=a^{(2)}x \quad\mbox{for some $a^{(2)}\in \mathcal{R}^{(2)}$}\\
&\Rightarrow&  0= 1^{\#}(1^{\#}v)=1^{\#}(a^{(2)}x )=(Re\,a^{(2)} )(1^{\#}x)\\
&\Rightarrow& Re\,a^{(2)}=0\quad\mbox{because $1^{\#}x\ne 0$ }\\
&\Rightarrow& 1^{\#}v=a^{(2)}x=(Re\, a^{(2)}+1^{\#}Ze\, a^{(2)})x =(Ze\,a^{(2)})( 1^{\#}x)\\
&\Rightarrow&1^{\#}v=1^{\#}((Ze\,a^{(2)})x)\in Im\,1^{\#}_{\mathcal{R}^{(2)}x},
\end{eqnarray*}
which implies that $\mathcal{R}^{(2)}x\bigcap Im\,1^{\#}_V\subseteq  Im\,1^{\#}_{\mathcal{R}^{(2)}x}$. This proves that 
 the $\mathcal{R}^{(2)}$-submodule $\mathcal{R}^{(2)}x$ is  pleasant for $x\not\in  Ker\,1^{\#}_V$.

In the case where  $x\in Ker\,1^{\#}_V\setminus  Im\,1^{\#}_V$,  we have
\begin{eqnarray*}
&&y\in  \mathcal{R}^{(2)}x\bigcap Im\,1^{\#}_V= \mathcal{R}x\bigcap Im\,1^{\#}_V\\
&\Rightarrow& y=1^{\#}v=ax\quad\mbox{for some $v\in V$ and $a\in  \mathcal{R}$}\\
&\Rightarrow& a=0 \quad\mbox{(because $a\ne 0$ gives $x=a^{-1}1^{\#}v\in Im\,1^{\#}_V$, a contradiction)}\\
&\Rightarrow& y=ax=0.
\end{eqnarray*}
It follows  that $\mathcal{R}^{(2)}x\bigcap Im\,1^{\#}_V=\{0\}=Im\,1^{\#}_{\mathcal{R}^{(2)}x}$. This proves that 
 the $\mathcal{R}^{(2)}$-submodule $\mathcal{R}^{(2)}x$ is also pleasant for $x\in Ker\,1^{\#}_V\setminus  Im\,1^{\#}_V$.

\hfill\raisebox{1mm}{\framebox[2mm]{}}

\bigskip
The following proposition gives the fundamental fact concerning  pleasant $\mathcal{R}^{(2)}$-submodules.

\medskip
\begin{proposition}\label{pr1.4} Let $W$ be a  $\mathcal{R}^{(2)}$-submodule a finite dimensional
$\mathcal{R}^{(2)}$-module $V$. Then the following are equivalent:
\begin{description} 
\item[(i)] $W$ is pleasant.
\item[(ii)] There exist 
$w_1^{(2)}, \dots, w_s^{(2)}\in W\setminus  Ker\,1^{\#}_V$ and  
$w_{s+1}, \dots, w_{s+t}\in (W\bigcap Ker\,1^{\#}_V)\setminus  Im\,1^{\#}_V$
such that the set 
$S_W:=(\{w_i^{(2)}\}_{i=1}^s\,\|\, \{w_{s+j}\}_{j=1}^t)$
is a  $(\mathcal{R}^{(2)}, \mathcal{R})$-basis of $W$, and the set $S_W$ can be extended to a 
$(\mathcal{R}^{(2)}, \mathcal{R})$-basis $S_V$ of the  $\mathcal{R}^{(2)}$-module $V$with
$$
S_V:=(\{w_i^{(2)}\}_{i=1}^s\cup \{w_{s+k}^{(2)}\}_{k=1}^n\,\|\, \{w_{s+j}\}_{j=1}^t\cup \{w_{s+t+\ell}\}_{\ell=1}^m).
$$
\end{description}
\end{proposition}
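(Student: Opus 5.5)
The plan is to translate everything into statements about real vector spaces along the filtration $\{0\}\subseteq Im\,1^{\#}_V\subseteq Ker\,1^{\#}_V\subseteq V$, and then argue by coordinates.

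\textbf{Characterization lemma.} First I will prove an auxiliary fact. Let $S_1\subseteq V\setminus Ker\,1^{\#}_V$ and $S_2\subseteq Ker\,1^{\#}_V\setminus Im\,1^{\#}_V$. Then $(S_1\,\|\,S_2)$ is a $(\mathcal{R}^{(2)},\mathcal{R})$-basis of $V$ if and only if both of the following hold:
\begin{description}
\item[(a)] the images of $S_1$ form a real basis of $V/Ker\,1^{\#}_V$;
\item[(b)] the images of $S_2$ form a real basis of $Ker\,1^{\#}_V/Im\,1^{\#}_V$.
\end{description}
The key observation is that $v\mapsto 1^{\#}v$ induces a real isomorphism $V/Ker\,1^{\#}_V\cong Im\,1^{\#}_V$. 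Hence (a) is equivalent to $1^{\#}S_1$ being a real basis of $Im\,1^{\#}_V$.

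For spanning, I peel off one layer at a time. Given $v$, subtract a real combination of $S_1$ to land in $Ker\,1^{\#}_V$. Then subtract a real combination of $S_2$ to land in $Im\,1^{\#}_V$. The remainder is a real combination of $1^{\#}S_1$. Collecting terms gives an $(\mathcal{R}^{(2)}\,\|\,\mathcal{R})$-combination.

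Independence runs the same filtration backwards. Reduce a vanishing combination mod $Ker\,1^{\#}_V$ to get $Re\,c_i^{(2)}=0$. Then reduce mod $Im\,1^{\#}_V$ to kill the $S_2$ coefficients. Finally, independence of $1^{\#}S_1$ gives $Ze\,c_i^{(2)}=0$.

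The converse direction of the lemma (a basis satisfies (a) and (b)) follows by the same bookkeeping. This lemma is essentially the content behind Proposition~\ref{pr1}, and I expect it to be the main technical step. Once it is available, both implications are short.

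\textbf{(ii)$\Rightarrow$(i).} The inclusion $Im\,1^{\#}_W\subseteq W\cap Im\,1^{\#}_V$ is automatic, so take $w\in W\cap Im\,1^{\#}_V$. Write $w=1^{\#}u$ and expand $u$ in $S_V$. Multiplying by $1^{\#}$ shows the following about the coordinates of $w$ in $S_V$:
\begin{itemize}
\item the coefficients on the $\mathcal{R}^{(2)}$-part all have zero real part;
\item the coefficients on the $\mathcal{R}$-part all vanish.
\end{itemize}
On the other hand, $w\in W$ has an expansion in $S_W\subseteq S_V$. By uniqueness of coordinates, which follows from $(\mathcal{R}^{(2)}\,\|\,\mathcal{R})$-linear independence of $S_V$, the two expansions agree. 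Hence
$$w=\sum_i (Ze\,c_i^{(2)})\,1^{\#}w_i^{(2)}=1^{\#}\Big(\sum_i (Ze\,c_i^{(2)})\,w_i^{(2)}\Big)\in Im\,1^{\#}_W.$$
So $W$ is pleasant.

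\textbf{(i)$\Rightarrow$(ii).} Apply the lemma inside $W$. Note that $Ker\,1^{\#}_W=W\cap Ker\,1^{\#}_V$.
\begin{itemize}
\item Choose $w_1^{(2)},\dots,w_s^{(2)}\in W$ whose images form a basis of $W/(W\cap Ker\,1^{\#}_V)$.
\item Choose $w_{s+1},\dots,w_{s+t}\in W\cap Ker\,1^{\#}_V$ whose images form a basis of $(W\cap Ker\,1^{\#}_V)/Im\,1^{\#}_W$.
\end{itemize}
By the lemma, $S_W$ is a basis of $W$.

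For the extension, the natural map $W/(W\cap Ker\,1^{\#}_V)\to V/Ker\,1^{\#}_V$ is injective. So the $w_i^{(2)}$ have independent images, and I extend them to a basis of $V/Ker\,1^{\#}_V$ by adding $w_{s+k}^{(2)}$.

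The crucial use of pleasantness is for the second layer. The map
$$(W\cap Ker\,1^{\#}_V)/Im\,1^{\#}_W\to Ker\,1^{\#}_V/Im\,1^{\#}_V$$
has kernel $(W\cap Im\,1^{\#}_V)/Im\,1^{\#}_W$, which is zero exactly because $W$ is pleasant. Therefore the images of the $w_{s+j}$ are independent in $Ker\,1^{\#}_V/Im\,1^{\#}_V$. In particular the $w_{s+j}$ lie outside $Im\,1^{\#}_V$, as (ii) requires, and they extend by some $w_{s+t+\ell}$ to a basis of that quotient.

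Finite dimensionality makes these quotients finite dimensional, so the extensions are finite. The lemma then shows $S_V$ is a $(\mathcal{R}^{(2)},\mathcal{R})$-basis of $V$ extending $S_W$.
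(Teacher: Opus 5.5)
Your proof is correct. For (ii)$\Rightarrow$(i) it is the paper's argument in different language. The paper splits $V=W\oplus U$ with $U$ spanned by the added vectors and computes $x=1^{\#}v=\sum_{i}(Re\,A_i^{(2)})(1^{\#}w_i^{(2)})$. It then discards the $U$-component via $W\cap U=\{0\}$, which is exactly your appeal to uniqueness of $S_V$-coordinates.

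The paper's proof stops there and never proves (i)$\Rightarrow$(ii). That half of your proposal therefore supplies a missing direction rather than offering an alternative route. Your layer-by-layer lemma reduces an $(\mathcal{R}^{(2)},\mathcal{R})$-basis to real bases of $V/Ker\,1^{\#}_V\cong Im\,1^{\#}_V$ and of $Ker\,1^{\#}_V/Im\,1^{\#}_V$, and it handles this direction cleanly.

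You also use pleasantness exactly where it is needed. The kernel of $(W\cap Ker\,1^{\#}_V)/Im\,1^{\#}_W\to Ker\,1^{\#}_V/Im\,1^{\#}_V$ is $(W\cap Im\,1^{\#}_V)/Im\,1^{\#}_W$, so pleasantness is precisely injectivity of this map. That injectivity is what forces $w_{s+j}\notin Im\,1^{\#}_V$ and lets the $\mathcal{R}$-part of $S_W$ be extended. As a check, for $W=Ker\,1^{\#}_V$ in a non-trivial module this kernel is $Im\,1^{\#}_V\neq\{0\}$, and (ii) indeed fails.

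Only the ``if'' half of your lemma is ever used, so the converse you defer to bookkeeping can simply be omitted.
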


\medskip
\noindent
{\bf Proof} Let us prove (ii) $\Rightarrow$ (i). If (ii) holds,  we have
\begin{equation}\label{eq54}
V=\left(\bigoplus _{i=1}^{s+n} \mathcal{R}^{(2)}w_i^{(2)}\right)\bigoplus
\left(\bigoplus _{j=1}^{t+m} \mathcal{R}w_{s+j}\right)=W\bigoplus U\quad\mbox{as $\mathcal{R}^{(2)}$-modules},
\end{equation}
where 
\begin{equation}\label{eq55}
W=\left(\bigoplus _{i=1}^s \mathcal{R}^{(2)}w_i^{(2)}\right)\bigoplus
\left(\bigoplus _{j=1}^t \mathcal{R}w_{s+j}\right)\quad\mbox{as $\mathcal{R}^{(2)}$-modules}
\end{equation}
and
\begin{equation}\label{eq56}
U=\left(\bigoplus _{i=s+1}^{s+n} \mathcal{R}^{(2)}w_i^{(2)}\right)\bigoplus
\left(\bigoplus _{j=t+1}^{t +m}\mathcal{R}w_{s+j}\right)\quad\mbox{as $\mathcal{R}^{(2)}$-modules.}
\end{equation}

To prove that $W$ is pleasant, it is enough to prove 
$W\bigcap Im\,1^{\#}_V\subseteq Im\,1^{\#}_W$. If $x\in W\bigcap Im\,1^{\#}_V$, then $x=1^{\#} v$ for some 
$v\in V$. By  (\ref{eq54}), there exist $\{A_i^{(2)}\}_{i=1}^{s+n}\subseteq \mathcal{R}^{(2)}$  and 
$ \{A_{s+j}\}_{j=1}^{t+m} \subseteq \mathcal{R}$ such that
$v=\displaystyle\sum_{i=1}^{s+n} A_i^{(2)}w_i^{(2)}+\displaystyle\sum_{j=1}^{t+m} A_{s+j}w_{s+j}$. Hence, we get
$W\ni x=1^{\#} v=\displaystyle\sum_{i=1}^{s+n} (Re\, A_i^{(2)})(1^{\#}w_i^{(2)})$.
It follows from (\ref{eq55}) and (\ref{eq56}) that
$$
W\ni 1^{\#} v-\displaystyle\sum_{i=1}^{s} (Re\, A_i^{(2)})(1^{\#}w_i^{(2)})=
\displaystyle\sum_{i=s+1}^{s+n} (Re\, A_i^{(2)})(1^{\#}w_i^{(2)})\in U,
$$
which implies that $1^{\#} v-\displaystyle\sum_{i=1}^{s} (Re\, A_i^{(2)})(1^{\#}w_i^{(2)})\in W\bigcap U=\{0\}$ or
$$x=1^{\#} v=\displaystyle\sum_{i=1}^{s} (Re\, A_i^{(2)})(1^{\#}w_i^{(2)})
=1^{\#} \displaystyle\sum_{i=1}^{s} (Re\, A_i^{(2)})w_i^{(2)}\in  Im\,1^{\#}_W.$$
This proves $W\bigcap Im\,1^{\#}_V\subseteq Im\,1^{\#}_W$. Hence, (i) holds.

\hfill\raisebox{1mm}{\framebox[2mm]{}}

\medskip
\begin{definition}\label{def1.5}
Let $(V,  \kappa (\,, \,))$ be a  real Hu-Liu inner product  $\mathcal{R}^{(2)}$-module, and let $S$ be a non-empty subset 
of $V$. We define $S_{\perp}$ to be the set of all dual real vectors in $V$ that are $\mathcal{R}^{(2)}$-orthogonal to every dual real vector in $S$; that is, 
$$
S_{\perp}:=\{x\in V\,|\, \kappa (x, y)=0\quad\mbox{for all $y\in S$}\}.
$$
The set $S_{\perp}$ is called the {\bf $\mathcal{R}^{(2)}$-orthogonal complement} of $S$.
\end{definition}

\medskip
Clearly, $S_{\perp}$ is a $\mathcal{R}^{(2)}$-submodule of $V$ for any subset $S$ of $V$.

\medskip
\begin{proposition}\label{pr1.5} If $W$ is a  pleasant $\mathcal{R}^{(2)}$-submodule a finite dimensional
$\mathcal{R}^{(2)}$-module $V$, then $V=W\oplus W_{\perp}$ as $\mathcal{R}^{(2)}$-modules.
\end{proposition}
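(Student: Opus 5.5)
We prove $W\cap W_{\perp}=\{0\}$ and $V=W+W_{\perp}$ by orthonormalizing an adapted basis of $W$ and projecting onto it, exactly as in the classical case. Here $V$ is understood to carry a real Hu-Liu inner product $\kappa$, since otherwise $W_{\perp}$ is undefined.

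\emph{Step 1: an orthonormal basis of $W$.} Since $W$ is pleasant, the direction (i) $\Rightarrow$ (ii) of Proposition \ref{pr1.4} (only the converse was written out above, so this direction has to be supplied) gives a $(\mathcal{R}^{(2)}, \mathcal{R})$-basis $S_W=(\{w_i^{(2)}\}_{i=1}^s\,\|\, \{w_{s+j}\}_{j=1}^t)$ of $W$. Its elements satisfy $w_i^{(2)}\notin Ker\,1^{\#}_V$ and $w_{s+j}\in Ker\,1^{\#}_V\setminus Im\,1^{\#}_V$. Applying the Hu-Liu extension of the Gram-Schmidt process (Proposition \ref{pr1.3}) gives a $\mathcal{R}^{(2)}$-orthonormal set $\gamma=(\{v_i^{(2)}\}_{i=1}^s\,\|\,\{v_{s+j}\}_{j=1}^t)$ with $span^\#(\gamma)=W$. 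Thus $\kappa(v_i^{(2)},v_i^{(2)})=1$, $\kappa(v_{s+j},v_{s+j})=1^{\#}$, and distinct members are orthogonal.

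\emph{Step 2: the decomposition $V=W+W_{\perp}$.} For $x\in V$, set
$$p(x)=\sum_{i=1}^s \kappa(x,v_i^{(2)})\,v_i^{(2)}+\sum_{j=1}^t c_j(x)\,v_{s+j},$$
and define $c_j(x)$ as follows. Write $\kappa(x,v_{s+j})=a_j+b_j1^{\#}$. Since $1^{\#}v_{s+j}=0$, property (ii) of Definition \ref{def1.2} gives $1^{\#}\kappa(x,v_{s+j})=\kappa(x,1^{\#}v_{s+j})=0$, which forces $a_j=0$. We then take $c_j(x)=b_j\in\mathcal{R}$, so that $c_j(x)\,\kappa(v_{s+j},v_{s+j})=b_j1^{\#}=\kappa(x,v_{s+j})$.

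Using orthonormality we get $\kappa(x-p(x),v_i^{(2)})=\kappa(x,v_i^{(2)})-\kappa(x,v_i^{(2)})\cdot 1=0$ and $\kappa(x-p(x),v_{s+j})=0$. By linearity, $x-p(x)$ is then orthogonal to all of $span^\#(\gamma)=W$. Hence $x=p(x)+(x-p(x))\in W+W_{\perp}$.

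\emph{Step 3: the intersection $W\cap W_{\perp}=\{0\}$.} This is the main obstacle, because property (iv) of Definition \ref{def1.2} only controls $Re\,\kappa(x,x)$ up to $Ker\,1^{\#}_V$. Take $x\in W\cap W_{\perp}$ and expand $x=\sum_i c_i^{(2)}v_i^{(2)}+\sum_j c_{s+j}v_{s+j}$ with $c_i^{(2)}\in\mathcal{R}^{(2)}$ and $c_{s+j}\in\mathcal{R}$.
\begin{itemize}
\item Pairing with $v_i^{(2)}$ gives $0=\kappa(x,v_i^{(2)})=c_i^{(2)}$.
\item Pairing with $v_{s+j}$ gives $0=c_{s+j}1^{\#}$, so $c_{s+j}=0$.
\end{itemize}
Therefore $x=0$.

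Where pleasantness truly enters is Step 1: it is needed to obtain a basis of $W$ of the required type, namely with the $\mathcal{R}$-part lying outside $Im\,1^{\#}_V$ so that (v) yields the value $1^{\#}$ rather than $0$. To prove (i) $\Rightarrow$ (ii) of Proposition \ref{pr1.4}, choose $w_i^{(2)}\in W$ whose images $1^{\#}w_i^{(2)}$ form a real basis of $Im\,1^{\#}_W$. Then complete with vectors of $W\cap Ker\,1^{\#}_V$ that form a real basis of a complement of $Im\,1^{\#}_W=W\cap Im\,1^{\#}_V$. Pleasantness is what ensures these vectors avoid $Im\,1^{\#}_V$.
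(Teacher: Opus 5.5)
Your proposal is correct and follows the paper's own route. The paper's proof is the one-line instruction to combine Proposition \ref{pr1.4} with the Hu-Liu Gram-Schmidt process, and you supply the missing details: the unwritten direction (i) $\Rightarrow$ (ii) of Proposition \ref{pr1.4}, the explicit projection $p$, and the trivial-intersection check.

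One refinement: pleasantness matters most inside the Gram-Schmidt step. That step needs the whole real span $H$ of the $w_{s+j}$ to meet $Im\,1^{\#}_V$ only in $0$, not just each $w_{s+j}$ to avoid $Im\,1^{\#}_V$, so that the orthogonalized type-2 vectors can be normalized to $\kappa=1^{\#}$. Your choice of $H$ as a complement of $W\cap Im\,1^{\#}_V=Im\,1^{\#}_W$ in $Ker\,1^{\#}_W$ provides exactly this.
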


\medskip
\noindent
{\bf Proof} Use Proposition \ref{pr1.4} and the  Hu-Liu extension of the Gram-Schmidt process.

\hfill\raisebox{1mm}{\framebox[2mm]{}}

\bigskip
Finally, we introduce  Hu-Liu matrices and present the basic facts about Hu-Liu matrices. In the following of this section,  
$\mathcal{K}$  denotes  a field. The {\bf dual  algebra over $\mathcal{K}$ } is denoted by $\mathcal{K}^{(2)}$, where
$\mathcal{K}^{(2)}=\mathcal{K}\,1\oplus \mathcal{K}\,1^{\#}$  is the 
2-dimensional  associative  algebra over $\mathcal{K}$,   $\{1, \,1^{\#}\}$ is a $\mathcal{K}$-basis 
for the  $\mathcal{K}$-space $\mathcal{K}^{(2)}$ and  the associative product on 
$\mathcal{K}^{(2)}$ is defined by
$$
(x_1+x_2\,1^{\#})(y_1+y_2\,1^{\#})=x_1y_1+(x_1y_2+x_2y_1)\,1^{\#},
$$
where $x_1$, $x_2$, $y_1$, $y_2\in \mathcal{K}$.  In particular,  $\mathcal{C}^{(2)}$ is called {\bf dual  complex algebra}, where
 $\mathcal{C}$ is the complex number field. 

If 
$x=x_1+x_2\,1^{\#}\in \mathcal{K}^{(2)}$ with $x_1$, $x_2\in \mathcal{K}$,
then $Re\,x:=x_1$ and $Ze\,x:=x_2$ are still called the {\bf  real part} and the  {\bf zero-divisor part} of $x$, respectively.

\medskip
Let $M_{n,m}(\mathcal{K})$ be the set of the ordinary $n\times m$ matrices whose entries are in a set $\mathcal{K}$. A matrix
$A=\left[\begin{array}{ccc}
A_1&|& A_21^{\#}\\
\hline
A_31^{\#}&|&A_41^{\#}
\end{array}\right]$ is called a
{\bf Hu-Liu $(s,t;n,m)$-matrix over $\mathcal{K}$ } if 
$$
A_1\in M_{s,n}(\mathcal{K}^{(2)}), \,\, A_2\in M_{s,m}(\mathcal{K}),\,\,
A_3\in M_{t,n}(\mathcal{K}),\,\, A_4\in M_{t,m}(\mathcal{K}),
$$
where $A_21^{\#}$, $A_31^{\#}$ and $A_41^{\#}$ are just the ordinary scalar multiplication for matrices. 
$A_1=(a_{ij})\in M_{s,n}(\mathcal{K}^{(2)})$ can be also expressed as 
$$A_1=Re\,A_1 +(Ze\, A_1)1^{\#},$$ where both $Re\,A_1=(Re\,a_{ij})$
and $Ze\,A_1=(Ze\,a_{ij})$ are the matrices in  $M_{s,n}(\mathcal{K})$.

Note that the Hu-Liu $(s,t;n,m)$-matrix $A=\left[\begin{array}{ccc}
A_1&|& A_21^{\#}\\
\hline
A_31^{\#}&|&A_41^{\#}
\end{array}\right]$ is also denoted by $A=\left[\begin{array}{lcl}
A_{11}&|& A_{12}\\
\hline
A_{21}&|&A_{22}
\end{array}\right]$, where
$$
A_{11}=A_1, \quad A_{12}=A_21^{\#},\quad A_{21}=A_31^{\#}, \quad A_{22}=A_41^{\#}
$$
and $A_{ij}$ is called the {\bf $(i, j)$-block of $A$} with $(i, j)\in\{(1, 1), (1, 2), (2, 1), (2, 2)\}$.

A  Hu-Liu $(n,m;n,m)$-matrix  is  called a {\bf square Hu-Liu $(n,m)$-matrix} . 
Let  $M_{s, t;n,m}^{\#}(\mathcal{K}^{(2)})$ be the set of  $(s,t;n,m)$-Hu-Liu matrices . For 
 $r\in \mathcal{K}^{(2)}$ and 
$$
A=\left[\begin{array}{ccc}
A_1&|& A_21^{\#}\\
\hline
A_31^{\#}&|&A_41^{\#}
\end{array}\right], \quad
B=\left[\begin{array}{ccc}
B_1&|& B_21^{\#}\\
\hline
B_31^{\#}&|&B_41^{\#}
\end{array}\right]
\in M_{s, t;n,m}^{\#}(\mathcal{K}^{(2)}),
$$ we define $A+B$ and $rA$ by
\begin{equation}\label{eq105}
A+B:=\left[\begin{array}{ccc} A_1+B_1&|&(A_2+B_2)1^{\#}\\\hline (A_3+B_3)1^{\#}&|&(A_4+B_4)1^{\#}\end{array}\right]
\end{equation}
and
\begin{equation}\label{eq106}
rA:==\left[\begin{array}{ccc} rA_1&|&rA_21^{\#}\\\hline rA_31^{\#}&|&rA_41^{\#}\end{array}\right]
=\left[\begin{array}{ccc} rA_1&|&(Re\,r)A_21^{\#}\\\hline (Re\,r)A_31^{\#}&|&(Re\,r)A_41^{\#}\end{array}\right].
\end{equation}
Using the addition defined by (\ref{eq105}) and the $\mathcal{K}^{(2)}$-module 
action defined by (\ref{eq106}), $M_{s, t;n,m}^{\#}(\mathcal{K}^{(2)})$ becomes a $\mathcal{K}^{(2)}$-module. We call
$M_{s, t;n,m}^{\#}$ the 
{\bf general Hu-Liu $(s, t;n,m)$-matrix $\mathcal{K}^{(2)}$-module.} 

\bigskip
Our generalization of the ordinary matrix product is given in the following

\medskip
\begin{definition}\label{def1.6} Let 
$B=\left[\begin{array}{ccc} (B_1)_{p\times s}&|&(B_21^{\#})_{p\times t}\\\hline 
(B_31^{\#})_{q\times s}&|&(B_41^{\#})_{q\times t}\end{array}\right]$ be a Hu-Liu $(p,q;s,t)$-matrix,  and let
$A=\left[\begin{array}{ccc} (A_1)_{s\times n}&|&(A_21^{\#})_{s\times m}\\\hline 
(A_31^{\#})_{t\times n}&|&(A_41^{\#})_{t\times m}\end{array}\right]$ be a Hu-Liu $(s,t;n,m)$-matrix. We define the {\bf Hu-Liu matrix product $B\# A$} of $B$ and $A$ to be the Hu-Liu $(p,q;n,m)$-matrix $BA$ given by
\begin{eqnarray}\label{eq107}
B\#A&=&\left[\begin{array}{ccc} (B_1)_{p\times s}&|&(B_21^{\#})_{p\times t}\\\hline 
(B_31^{\#})_{q\times s}&|&(B_41^{\#})_{q\times t}\end{array}\right]
\left[\begin{array}{ccc} (A_1)_{s\times n}&|&(A_21^{\#})_{s\times m}\\\hline 
(A_31^{\#})_{t\times n}&|&(A_41^{\#})_{t\times m}\end{array}\right]\nonumber\\
&&\nonumber\\
&=&
\left[\begin{array}{ccc} B_1A_1+B_2A_31^{\#}&|&\Big((Re\,B_1)A_2+B_2A_4\Big)1^{\#}\\\hline 
\Big(B_3(Re\, A_1)+B_4A_3\Big)1^{\#}&|&(B_4A_4)1^{\#}\end{array}\right].
\end{eqnarray}
\end{definition}

\medskip
Like the ordinary matrix product, the Hu-Liu matrix product is also associative.

\medskip
\medskip
\begin{proposition}\label{pr1.6} Let $C=\left[\begin{array}{ccc} (C_1)_{h\times p}&|&(C_21^{\#})_{h\times q}\\\hline 
(C_31^{\#})_{k\times p}&|&(C_41^{\#})_{k\times q}\end{array}\right]$ be a Hu-Liu $(h,k;p,q)$-matrix,  and let $B$ and $A$ be the Hu-Liu
matrices in Definition \ref{def1.6}. Then 
\begin{equation}\label{eq108}
C\#(B\#A)=(C\#B)\#A.
\end{equation}
\end{proposition}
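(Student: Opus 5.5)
The most economical route is to realize each Hu-Liu $(s,t;n,m)$-matrix as a module map between the modules of Section 1; associativity then comes from composition of maps. I expect the only delicate point to be checking that the formula (\ref{eq107}) really matches composition. If that check looks awkward, the fallback is a direct block computation, which is also short.

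For the conceptual route, let $A$ act as a $\mathcal{K}^{(2)}$-module map $L_A:\mathcal{K}^{(2)n,m}\to\mathcal{K}^{(2)s,t}$ by the formal product
$$\left[\begin{array}{c}x\\ y1^{\#}\end{array}\right]\mapsto\left[\begin{array}{c}A_1x+A_2y1^{\#}\\ \big(A_3(Re\,x)+A_4y\big)1^{\#}\end{array}\right].$$
Here $x\in(\mathcal{K}^{(2)})^n$, $y\in\mathcal{K}^m$, and the relation $(1^{\#})^2=0$ has been used. I would then check three things.
\begin{description}
\item[(a)] $L_A$ is a $\mathcal{K}^{(2)}$-module map.
\item[(b)] $A\mapsto L_A$ is injective: evaluate on the standard basis vectors $e_i^{(2)}$ and $e_{n+j}$. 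The image of $e_i^{(2)}$ recovers the $i$-th column of $A_1$ and of $A_3$, and the image of $e_{n+j}$ recovers the $j$-th column of $A_2$ and of $A_4$.
\item[(c)] $L_B\circ L_A=L_{B\#A}$. Substitute and use $1^{\#}\cdot 1^{\#}=0$ and $1^{\#}z=(Re\,z)1^{\#}$ for $z\in\mathcal{K}^{(2)}$. This should reproduce the four blocks of (\ref{eq107}) exactly; for instance the $(1,2)$-block gives $B_1A_2y1^{\#}=(Re\,B_1)A_2y1^{\#}$.
\end{description}
Given (a)--(c), associativity of composition yields $L_{C\#(B\#A)}=L_{(C\#B)\#A}$, and injectivity gives (\ref{eq108}).

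The main obstacle is (c), specifically the $(1,1)$-block. There one must verify $B_1A_1x+B_2A_3(Re\,x)1^{\#}=(B_1A_1+B_2A_31^{\#})x$. This holds because $1^{\#}x=1^{\#}Re\,x$. Care is needed throughout to distinguish matrices over $\mathcal{K}$ from those over $\mathcal{K}^{(2)}$.

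The direct alternative uses only the identities $Re(XY)=(Re\,X)(Re\,Y)$ for matrices over $\mathcal{K}^{(2)}$, $X1^{\#}=(Re\,X)1^{\#}$, and associativity of ordinary matrix multiplication over the commutative ring $\mathcal{K}^{(2)}$. Expand both sides blockwise.
\begin{description}
\item[$(1,1)$-block.] Both sides equal $C_1B_1A_1+C_1B_2A_31^{\#}+C_2B_3(Re\,A_1)1^{\#}+C_2B_4A_31^{\#}$, where $C_1B_2A_31^{\#}$ is read as $(Re\,C_1)B_2A_31^{\#}$.
\item[$(1,2)$-block.] The coefficient of $1^{\#}$ on both sides is $(Re\,C_1)(Re\,B_1)A_2+(Re\,C_1)B_2A_4+C_2B_4A_4$. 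Note that $Re(C_1B_1+C_2B_31^{\#})=(Re\,C_1)(Re\,B_1)$.
\item[$(2,1)$-block.] The coefficient of $1^{\#}$ on both sides is $C_3(Re\,B_1)(Re\,A_1)+C_4B_3(Re\,A_1)+C_4B_4A_3$.
\item[$(2,2)$-block.] Both sides give $(C_4B_4A_4)1^{\#}$.
\end{description}
Matching these four expressions proves (\ref{eq108}).
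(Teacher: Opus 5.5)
Your proposal is correct, and it contains two proofs. The fallback block computation is essentially the paper's own proof. The paper expands $\big(C\#(B\#A)\big)_{11}$ and uses $C_1B_2A_31^{\#}=(Re\,C_1)B_2A_31^{\#}$ and $C_2B_3(Re\,A_1)1^{\#}=(C_2B_3)A_11^{\#}$ to regroup it as $\big((C\#B)\#A\big)_{11}$. It then dismisses the other three blocks with ``similarly''. You write all four out, and the identity $Re\,(C_1B_1+C_2B_31^{\#})=(Re\,C_1)(Re\,B_1)$ that you single out is exactly what the omitted $(1,2)$- and $(2,1)$-blocks need.

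Your primary route is genuinely different. You let an $(s,t;n,m)$-matrix act as the $\mathcal{K}^{(2)}$-module map $L_A:\mathcal{K}^{(2)n,m}\to\mathcal{K}^{(2)s,t}$ whose columns are the images of the standard $(\mathcal{K}^{(2)},\mathcal{K})$-basis, and you reduce associativity to associativity of composition. Steps (a)--(c) all check out. One wording point: $L_A$ is $A$ applied to the coordinate vector $(x,y)$, not a formal product with $(x,y1^{\#})$. The latter, with $(1^{\#})^2=0$, would kill the $A_2$- and $A_4$-contributions. Your displayed formula is the right one, so nothing breaks.

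The paper's computation is self-contained and needs only that $Re$ is multiplicative and that $X1^{\#}=(Re\,X)1^{\#}$. However, it is a three-factor check, and it gives no reason why this unusual product, with its terms $B_2A_31^{\#}$ and $(B_4A_4)1^{\#}$, should be associative. Your route needs only the two-factor computation (c), and it explains $\#$ as composition written in coordinates. It also shows at once that $I^{\#}_{n,m}$ is a two-sided identity. Add the easy observation that every module map sends each $e_{n+j}$ into $Ker\,1^{\#}$ of the target, so $A\mapsto L_A$ is also onto. Then your route identifies $M^{\#}_{n,m}(\mathcal{K}^{(2)})$ with the $\mathcal{K}^{(2)}$-endomorphisms of $\mathcal{K}^{(2)n,m}$. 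That is the fact behind the isomorphism $GL^{\#}_{n,m}(V)\simeq GL^{\#}_{n,m}(\mathcal{C}^{(2)})$, which the paper asserts without proof in Section 4.
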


\medskip
\noindent
{\bf Proof} By (\ref{eq107}), we have
\begin{equation}\label{eq109}
C\#B=
\left[\begin{array}{ccc} C_1B_1+C_2B_31^{\#}&|&\Big((Re\,C_1)B_2+C_2B_4\Big)1^{\#}\\\hline 
\Big(C_3(Re\, B_1)+C_4B_3\Big)1^{\#}&|&(C_4B_4)1^{\#}\end{array}\right].
\end{equation}

It follows from  (\ref{eq107}) and  (\ref{eq109}) that
\begin{eqnarray}\label{eq110}
&&\Big(C\#(B\#A)\Big)_{11}=C_1(B_1A_1+B_2A_31^{\#})+C_2(B_3(Re\, A_1)+B_4A_3\Big)1^{\#}\nonumber\\
&=&C_1B_1A_1+C_1B_2A_31^{\#}+C_2B_3(Re\, A_1)1^{\#}+C_2B_4A_31^{\#}\nonumber\\
&=&C_1B_1A_1+(Re\,C_1)B_2A_31^{\#}+C_2B_3(Re\, A_1)1^{\#}+C_2B_4A_31^{\#}\nonumber\\
&=&C_1B_1A_1+(Re\,C_1)B_2A_31^{\#}+(C_2B_3)A_11^{\#}+C_2B_4A_31^{\#}\nonumber\\
&=&(C_1B_1+(C_2B_3)1^{\#})A_1+((Re\,C_1)B_2+C_2B_4)A_31^{\#}\nonumber\\
&=&\Big((C\#B)\#A\Big)_{11}.
\end{eqnarray}

\medskip
Similarly, we have
\begin{equation}\label{eq111}
\Big((C\#B)\#A\Big)_{ij}=\Big((C\#B)\#A\Big)_{ij}\quad\mbox{for $(i,j)\in\{(1,2), (2,1), (2,2)\}$}.
\end{equation}

By (\ref{eq110}) and (\ref{eq111}),   the equation (\ref{eq108}) holds.

\hfill\raisebox{1mm}{\framebox[2mm]{}}

\bigskip
Let  $M_{n,m}^{\#}(\mathcal{K}^{(2)}):=M_{n, m;n,m}^{\#}(\mathcal{K}^{(2)})$ be the set of  square Hu-Liu 
$(n,m)$-matrices. By Proposition \ref{pr1.6},  $M_{n,m}^{\#}(\mathcal{K}^{(2)})$ is an 
associative real algebra with the identity  $I^{\#}_{n,m}$, where
$$
I^{\#}_{n,m}:=\left[\begin{array}{ccc} I_n&|&0_{p\times t}\\\hline 
0_{q\times s}&|&I_m1^{\#}\end{array}\right]
$$
and $I_n$ is the ordinary identity matrix in $M_{n,n}(\mathcal{K})$.

\medskip
\begin{definition}\label{def1.7} Let $A$ be a square  Hu-Liu $(n,m)$-matrix. Then $A$ is 
{\bf invertible} if there exists a square  Hu-Liu $(n,m)$-matrix $B$ such that $A\#B=B\#A=I^{\#}_{n,m}$. 
The Hu-Liu $(n,m)$-matrix $B$ is called 
the {\bf inverse} of $A$ and is denoted by $A^{\stackrel{\#}{-}1}$.
\end{definition}

\medskip
\begin{proposition}\label{pr1.7} A square Hu-Liu $(n,m)$-matrix 
$$A=\left[\begin{array}{ccc} (A_1)_{n\times n}&|&(A_21^{\#})_{n\times m}\\\hline 
(A_31^{\#})_{m\times n}&|&(A_41^{\#})_{m\times m}\end{array}\right]$$ is invertible if and only if the $n\times n$ matrix $Re\,A_1$  and the $m\times m$ matrix $A_4$ are invertible, in which case, the inverse $A^{\stackrel{\#}{-}1}$ of $A$ is 
given by
\begin{equation}\label{eq114}
A^{\stackrel{\#}{-}1}=\left[\begin{array}{ccc}
\big(A^{\stackrel{\#}{-}1}\big)_1&|&
-(Re\,A_1)^{-1}A_2A_4^{-1}1^{\#}\\\hline 
-A_4^{-1}A_3(Re\,A_1)^{-1}1^{\#}&|&A_4^{-1}1^{\#}\end{array}\right],
\end{equation}
where 
\begin{eqnarray}\label{eq115}
\big(A^{\stackrel{\#}{-}1}\big)_1&=&(Re\,A_1)^{-1}-(Re\,A_1)^{-1}(Ze\,A_1)(Re\,A_1)^{-1}1^{\#}+\nonumber\\
&&\quad +(Re\,A_1)^{-1}A_2A_4^{-1}A_3(Re\,A_1)^{-1}1^{\#}.
\end{eqnarray}
\end{proposition}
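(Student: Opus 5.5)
The plan is to reduce invertibility in $M_{n,m}^{\#}(\mathcal{K}^{(2)})$ to ordinary matrix invertibility by passing to the ``real part'' of a Hu-Liu matrix. For a square Hu-Liu $(n,m)$-matrix $A$ I will attach the pair $\rho(A):=(Re\,A_1,\,A_4)\in M_{n,n}(\mathcal{K})\times M_{m,m}(\mathcal{K})$. Reading off the $(1,1)$ and $(2,2)$ blocks of (\ref{eq107}), and using that $Re\,(B_1A_1+B_2A_3 1^{\#})=(Re\,B_1)(Re\,A_1)$, I get
$$\rho(B\#A)=\big((Re\,B_1)(Re\,A_1),\,B_4A_4\big),\qquad \rho(I^{\#}_{n,m})=(I_n,I_m).$$
So $\rho$ is multiplicative and unital.

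For necessity, suppose $A\#B=B\#A=I^{\#}_{n,m}$. Applying $\rho$ gives $(Re\,A_1)(Re\,B_1)=(Re\,B_1)(Re\,A_1)=I_n$ and $A_4B_4=B_4A_4=I_m$. Hence $Re\,A_1$ and $A_4$ are invertible as ordinary matrices, and moreover $Re\,B_1=(Re\,A_1)^{-1}$ and $B_4=A_4^{-1}$.

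For sufficiency, I will not just verify the displayed formula; I will derive it by solving $A\#B=I^{\#}_{n,m}$ block by block with (\ref{eq107}), which also shows where (\ref{eq114}) and (\ref{eq115}) come from.
\begin{itemize}
\item The $(2,2)$ block gives $B_4=A_4^{-1}$.
\item The $(1,2)$ block requires $(Re\,A_1)B_2+A_2B_4=0$, so $B_2=-(Re\,A_1)^{-1}A_2A_4^{-1}$.
\item The $(2,1)$ block requires $A_3(Re\,B_1)+A_4B_3=0$. Using $Re\,B_1=(Re\,A_1)^{-1}$ from the real part of the $(1,1)$ equation, this gives $B_3=-A_4^{-1}A_3(Re\,A_1)^{-1}$.
\item The $(1,1)$ block requires $A_1B_1+A_2B_3 1^{\#}=I_n$. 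Its real part is $(Re\,A_1)(Re\,B_1)=I_n$. Its $1^{\#}$-part is $(Re\,A_1)(Ze\,B_1)+(Ze\,A_1)(Re\,B_1)+A_2B_3=0$, which solves to $Ze\,B_1=-(Re\,A_1)^{-1}(Ze\,A_1)(Re\,A_1)^{-1}+(Re\,A_1)^{-1}A_2A_4^{-1}A_3(Re\,A_1)^{-1}$. This is exactly (\ref{eq115}).
\end{itemize}
Thus the $B$ displayed in (\ref{eq114}) satisfies $A\#B=I^{\#}_{n,m}$.

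It remains to check $B\#A=I^{\#}_{n,m}$, and I expect this to be the main bookkeeping obstacle. The $(2,2)$ block is immediate. The $(1,2)$ block is $(Re\,B_1)A_2+B_2A_4=(Re\,A_1)^{-1}A_2-(Re\,A_1)^{-1}A_2=0$, and the $(2,1)$ block cancels the same way. In the $(1,1)$ block, the $1^{\#}$-coefficient is $(Re\,B_1)(Ze\,A_1)+(Ze\,B_1)(Re\,A_1)+B_2A_3$. Substituting, this becomes $(Re\,A_1)^{-1}Ze\,A_1-(Re\,A_1)^{-1}Ze\,A_1+(Re\,A_1)^{-1}A_2A_4^{-1}A_3-(Re\,A_1)^{-1}A_2A_4^{-1}A_3=0$.

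Alternatively, the left-inverse check can be avoided. By the same block-solving, $A$ has a left inverse $B'$. Associativity (Proposition \ref{pr1.6}) then gives $B'=B'\#(A\#B)=(B'\#A)\#B=B$, so $B$ is a two-sided inverse. The same argument shows the inverse is unique, which justifies the notation $A^{\stackrel{\#}{-}1}$.
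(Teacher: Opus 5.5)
Your proof is correct and follows the same route as the paper, whose proof is simply ``a direct computation.'' Your three ingredients together are exactly that computation: reading off necessity from the real part of the $(1,1)$ block and from the $(2,2)$ block via the multiplicative map $A\mapsto(Re\,A_1,\,A_4)$, solving $A\#B=I^{\#}_{n,m}$ block by block, and then checking $B\#A=I^{\#}_{n,m}$. All four blocks you obtain agree with the stated inverse. Because you verify both products directly, your main argument does not need the associativity of Proposition \ref{pr1.6}. Your left-inverse shortcut is a valid alternative, but it does rely on that associativity.
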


\medskip
\noindent
{\bf Proof} A direct computation.

\hfill\raisebox{1mm}{\framebox[2mm]{}}

\bigskip
\section{ $R^{(2)}$-root systems}

Let $V$ be a  finite dimensional  $\mathcal{R}^{(2)}$-module $V$ with 
$\dim_{\mathcal{R}^{(2)}}(V)=(n,\,m)$. Let
$$
GL^{\#}_{n,m}(V, \mathcal{R}^{(2)}):=\{T\,|\, 
\mbox{$T:V\to V$ is an invertible $\mathcal{R}^{(2)}$-map}\}.
$$
Clearly, $GL^{\#}_{n,m}(V, \mathcal{R}^{(2)})$ is a group which is called the 
{\bf general $\mathcal{R}^{(2)}$-linear group} of all  invertible $\mathcal{R}^{(2)}$-maps of $V$.

\medskip
\begin{definition}\label{def2.1} Let $(V,  \kappa (\,, \,))$ be a real Hu-Liu inner product  
$\mathcal{R}^{(2)}$-module.  An invertible $\mathcal{R}^{(2)}$-map $T: V\to V$ is said to be 
{\bf $\mathcal{R}^{(2)}$-orthogonal map} of $V$ if
$$
\kappa (T(x), T(y))=\kappa (x, y)\quad\mbox{for all $x$, $y\in V$.}
$$
\end{definition}

\medskip
Let $O^{\#}_{n,m}(V, \mathcal{R}^{(2)})$, which is also denoted by  $O^{\#}_{n,m}(V)$ or
$O^{\#}(V)$, be the set  of 
$\mathcal{R}^{(2)}$-orthogonal maps of $V$. For $x, y\in V$. Clearly,
 $O^{\#}_{n,m}(V, \mathcal{R}^{(2)})$ is a subgroup of $GL^{\#}_{n,m}(V, \mathcal{R}^{(2)})$ and 
is called the {\bf $\mathcal{R}^{(2)}$-orthogonal group}. 

\medskip
\begin{definition}\label{def1} Let $W$ be $\mathcal{R}^{(2)}$-submodule of a $(n, m)$-dimensional $\mathcal{R}^{(2)}$-module $V$. We say that
$$
\mbox{$W$ is a}\left\{\begin{array}{l}\mbox{{\bf type 1 $\mathcal{R}^{(2)}$-hyperplane in $V$} if  $\dim_{\mathcal{R}^{(2)}}(W)=(n-1,\,m)$,}\\
\mbox{{\bf type 2 $\mathcal{R}^{(2)}$-hyperplane in $V$} if  $\dim_{\mathcal{R}^{(2)}}(W)=(n,\,m-1)$.}
\end{array}\right.
$$
\end{definition}

\medskip
 Let $(V,  \kappa (\,, \,))$ be a real Hu-Liu inner product  
$\mathcal{R}^{(2)}$-module with $\dim_{\mathcal{R}^{(2)}}(V)=(n,\,m)$.  Clearly, we have
$$v\not \in Im\,1^{\#}_V\Longleftrightarrow   v\in  \big(V\setminus  Ker\,1^{\#}_V\big)\bigcup  
\big(Ker\,1^{\#}_V\setminus Im\,1^{\#}_V\big).$$
By Proposition \ref{pr1.4}, the $\mathcal{R}^{(2)}$-submodule  $\mathcal{R}^{(2)}v$  is pleasant for 
$v\not \in Im\,1^{\#}_V$. It follows from
this fact and Proposition \ref{pr1.5} that
\begin{equation}\label{eq131}
V=\mathcal{R}^{(2)}v\oplus \big(\mathcal{R}^{(2)}v\big)_{\perp}\quad\mbox{ for 
$v\not \in Im\,1^{\#}_V$},
 \end{equation}
where $ \big(\mathcal{R}^{(2)}v\big)_{\perp}$ is the  $\mathcal{R}^{(2)}$-orthogonal complement of 
$\mathcal{R}^{(2)}v$.  It is easy to check that $\big(\mathcal{R}^{(2)}v\big)_{\perp}$ is a  type 1 hyperplane for
$v\in V\setminus  Ker\,1^{\#}_V$, and $\big(\mathcal{R}^{(2)}v\big)_{\perp}$ is a  type 2 hyperplane for
$Ker\,1^{\#}_V\setminus Im\,1^{\#}_V$.

\medskip
We now introduce the counterparts of the reflections in a real vector spaces.

\medskip
\begin{definition}\label{def2.3}  Let $(V,  \kappa (\,, \,))$ be a finite dimensional real Hu-Liu inner product  
$\mathcal{R}^{(2)}$-module. For $v\not\in  Im\,1^{\#}_V$, the
{\bf $\mathcal{R}^{(2)}$-reflection  } $S_v$ determined by $v$ is defined by
\begin{equation}\label{eq132}
S_v(x):=\left\{\begin{array}{ll}x-2\,\displaystyle\frac{\kappa (x, v)}{\kappa (v, v)}v& 
\mbox{if $v\in V\setminus  Ker\,1^{\#}_V$},\\&\\
x-2\,\displaystyle\frac{Ze\,\kappa (x, v)}{Ze\,\kappa (v, v)}v&\mbox{if $v\in Ker\,1^{\#}_V\setminus Im\,1^{\#}_V$}.
\end{array}\right.
\end{equation}
We say that
$$
\mbox{$S_v$ is a}\left\{\begin{array}{l}\mbox{{\bf type 1 $\mathcal{R}^{(2)}$-reflection} if  
$v\in V\setminus  Ker\,1^{\#}_V$}\\
\mbox{{\bf type 2 $\mathcal{R}^{(2)}$-reflection} if  $v\in Ker\,1^{\#}_V\setminus Im\,1^{\#}_V$.}
\end{array}\right.
$$
\end{definition}

\medskip
Clearly,   the $\mathcal{R}^{(2)}$-reflection $S_v$ sends $v$ to $-v$ while fixing pointwise the $\mathcal{R}^{(2)}$-hyperplane 
$\big(\mathcal{R}^{(2)}v\big)_{\perp}$.  Here are some basic properties of $\mathcal{R}^{(2)}$-reflections.

\bigskip
Every $\mathcal{R}^{(2)}$-reflection of $V$ is a $\mathcal{R}^{(2)}$-orthogonal map of $V$. A finite subgroup of the $\mathcal{R}^{(2)}$-orthogonal group $O^{\#}_{n,m}(V)$ generated by  $\mathcal{R}^{(2)}$-reflections is called a finite  {\bf $\mathcal{R}^{(2)}$-reflection group} on $V$. In the following of this paper, we use $W^{\#}$ or $W^{\#}(V)$ to denote a finite  
 $\mathcal{R}^{(2)}$-reflection group on $V$.

\bigskip
We now introduce the concept of $\mathcal{R}^{(2)}$-root system which generalizes the concept of the root system in Euclidean spaces.

\medskip
\begin{definition}\label{def2.4} Let $V$ be a finite dimensional  
$\mathcal{R}^{(2)}$-module. A non-empty finite subset $\Phi$ of $V$ is called a 
{\bf $\mathcal{R}^{(2)}$-root system} if the set $\Phi$ satisfies the following conditions:
\begin{description}
\item[(i)] There exist the subsets $\Phi_{Re}$ and  $\Phi_{Ze}$ of $V$ such that
$$
\Phi=\Phi_{Re}\cup \Phi_{Ze}, \quad \Phi_{Re}\subseteq V\setminus  Ker\,1^{\#}_V, \quad
 \Phi_{Ze}\subseteq Ker\,1^{\#}_V\setminus Im\,1^{\#}_V;
$$
\item[(ii)]  $\Phi_{\heartsuit}\cap \mathcal{R}^{(2)}v_{\heartsuit}=\{v_{\heartsuit}, \, -v_{\heartsuit}\}$ for  
$v_{\heartsuit}\in \Phi_{\heartsuit}$ with 
$\heartsuit\in \{Re, \, Ze\}$;
\item[(iii)] $S_{v_{\heartsuit}}(\Phi_{\diamondsuit})=\Phi_{\diamondsuit}$ for  $v_{\heartsuit}\in \Phi_{\heartsuit}$ with 
$\heartsuit, \diamondsuit\in \{Re, \, Ze\}$.
\end{description}
\end{definition}

\medskip
We define a binary relation $<$ on  the  dual real number algebra $\mathcal{R}^{(2)}$ by
\begin{equation}\label{eq148}
x^{(2)}<y^{(2)} \Longleftrightarrow \mbox{either } Re\,x^{(2)}<Re\,y^{(2)} \,\mbox{or} \,
\left\{\begin{array}{l}Re\,x^{(2)}=Re\,y^{(2)},\\Ze\,x^{(2)}<Ze\,y^{(2)}.
\end{array}\right.,
 \end{equation}
where $x^{(2)}$, $y^{(2)}\in \mathcal{R}^{(2)}$.
The basic properties of the  binary relation $<$ on   $\mathcal{R}^{(2)}$ are given in the following proposition.

\medskip
\begin{proposition}\label{pr2.4} If $<$ is the  binary relation  defined by  (\ref{eq148}), then
\item[(i)] For all $u$, $v$, $w\in \mathcal{R}^{(2)}$, if $u\le v$ and $v\le w$, then $u\le w$.
\item[(ii)] For each $u$, $v\in \mathcal{R}^{(2)}$ exactly one of $u<v$, $u=v$, $v<u$ holds.
\item[(iii)] $u<v\Longrightarrow u+w<v+w$ for all $u$, $v$, $w\in \mathcal{R}^{(2)}$.
\item[(iv)] If  $0<u\in \mathcal{R}^{(2)}$ and $0<c^{(2)}\in \mathcal{R}^{(2)}$,  then $c^{(2)}u\ge  0$, where the equality holds if and only if 
 $\{c^{(2)}, u\}\subseteq \mathcal{R}1^{\#}$.
\item[(v)] If  $0<u\in \mathcal{R}^{(2)}$ and $0>c^{(2)}\in \mathcal{R}^{(2)}$,  then $c^{(2)}u\le  0$, where the equality holds if and only if 
 $\{c^{(2)}, u\}\subseteq \mathcal{R}1^{\#}$.
\end{proposition}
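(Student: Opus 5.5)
The plan is to reduce everything to the fact that $<$ in (\ref{eq148}) is the lexicographic order on pairs $(Re\,x^{(2)},Ze\,x^{(2)})\in\mathcal{R}\times\mathcal{R}$. Throughout, I identify $x^{(2)}$ with $(Re\,x^{(2)},Ze\,x^{(2)})$ and use that $Re$ and $Ze$ are additive. I also use the product formula $Re(c^{(2)}u)=Re\,c^{(2)}\,Re\,u$ and $Ze(c^{(2)}u)=Re\,c^{(2)}\,Ze\,u+Ze\,c^{(2)}\,Re\,u$, which comes from the definition of the product on $\mathcal{R}^{(2)}$.

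For (i), I would split according to whether each of $u\le v$ and $v\le w$ is an equality, a strict inequality of real parts, or an equality of real parts with a strict inequality of zero-divisor parts. In each case the conclusion follows from transitivity of the order on $\mathcal{R}$. For (ii), I compare $Re\,u$ and $Re\,v$ by trichotomy in $\mathcal{R}$. If they differ, exactly one of $u<v$ and $v<u$ holds. If they are equal, I apply trichotomy to the zero-divisor parts, noting that $u=v$ exactly when both parts agree. For (iii), adding $w$ shifts both $Re$ and $Ze$ by the same real numbers, so each defining alternative in (\ref{eq148}) is preserved.

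The only step with content is (iv). I would first record that $0<x^{(2)}$ means either $Re\,x^{(2)}>0$, or $Re\,x^{(2)}=0$ and $Ze\,x^{(2)}>0$, and then work through four cases.
\begin{itemize}
\item If $Re\,c^{(2)}>0$ and $Re\,u>0$, then $Re(c^{(2)}u)>0$, so $c^{(2)}u>0$.
\item If $Re\,c^{(2)}>0$, $Re\,u=0$ and $Ze\,u>0$, then $c^{(2)}u=(Re\,c^{(2)})(Ze\,u)1^{\#}$ has zero real part and positive zero-divisor part, so $c^{(2)}u>0$.
\item The case $Re\,c^{(2)}=0$, $Ze\,c^{(2)}>0$, $Re\,u>0$ is symmetric to the previous one, and again $c^{(2)}u>0$.
\item If $Re\,c^{(2)}=Re\,u=0$, then $c^{(2)}u=0$ because $(1^{\#})^2=0$.
\end{itemize}
Together these give $c^{(2)}u\ge 0$, with equality exactly in the last case, that is, exactly when $\{c^{(2)},u\}\subseteq\mathcal{R}1^{\#}$.

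For (v), I would apply (iii) with $w=-c^{(2)}$ to turn $c^{(2)}<0$ into $0<-c^{(2)}$. Then (iv) gives $(-c^{(2)})u\ge 0$, and adding $c^{(2)}u$ via (iii) gives $c^{(2)}u\le 0$. Equality holds iff $\{-c^{(2)},u\}\subseteq\mathcal{R}1^{\#}$, which is the same as $\{c^{(2)},u\}\subseteq\mathcal{R}1^{\#}$. I expect the main care to be needed in the equality clause of (iv): it must be checked that all three mixed cases give a strictly positive product, so that equality really forces both real parts to vanish.
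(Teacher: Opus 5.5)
Your proposal is correct and follows the paper's proof. The core of (iv) is the same four-case analysis on whether $Re\,c^{(2)}$ and $Re\,u$ are positive or zero, and (i)--(iii) are the routine lexicographic facts the paper calls clear. You make two small streamlinings, both sound: reading the equality clause off the strict positivity in the first three cases gives both directions at once, whereas the paper starts from $c^{(2)}u=0$, analyses $Re\,c^{(2)}\,Re\,u=0$ and $Re\,c^{(2)}\,Ze\,u+Ze\,c^{(2)}\,Re\,u=0$, and only spells out the ``only if'' direction; and deducing (v) from (iv) via (iii) with $w=-c^{(2)}$ replaces the parallel case analysis the paper leaves as ``similar''.
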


\medskip
\noindent
{\bf Proof}   (i), (ii) and (iii) are clear. The proof of (iv) and (v) are similar. For example, let us prove (iv).
First we prove
\begin{equation}\label{eq149}
c^{(2)}u\ge  0.
\end{equation}

\medskip
Since
\begin{eqnarray*}
c^{(2)}u&=&(Re\,c^{(2)}+1^{\#}Ze\,c^{(2)})(Re\,u+1^{\#}Ze\,u)\\
&=&Re\,c^{(2)}Re\,u+1^{\#}(Re\,c^{(2)}\, Ze\,u+Ze\,c^{(2)}\, Re\,u),
\end{eqnarray*}
we get
\begin{equation}\label{eq150}
Re\,(c^{(2)}u)=Re\,c^{(2)}Re\,u\quad\mbox{and}\quad 
Ze\,(c^{(2)}u)=Re\,c^{(2)}\, Ze\,u+Ze\,c^{(2)}\, Re\,u.
\end{equation}

If $Re\,u>0$ and $Re\,c^{(2)}>0$, then 
$$Re\,(c^{(2)}u)\stackrel{ (\ref{eq150})}{=} 
Re\,c^{(2)}Re\,u>0\stackrel{ (\ref{eq148})}{\Longrightarrow}  \mbox{(\ref{eq149}) holds.}$$

If $Re\,u>0$ and $\left\{\begin{array}{l}Re\,c^{(2)}=0,\\Ze\,c^{(2)}>0\end{array}\right.$, then 
$$
Re\,(c^{(2)}u)=0\,\mbox{and}\,
Ze\,(c^{(2)}u)\stackrel{ (\ref{eq150})}{=} Ze\,c^{(2)}\, Re\,u
>0\stackrel{ (\ref{eq148})}{\Longrightarrow}  \mbox{(\ref{eq149}) holds.}$$

If  $\left\{\begin{array}{l}Re\,u=0,\\Ze\,u>0\end{array}\right.$ and $Re\,c^{(2)}>0$, then 
$$
Re\,(c^{(2)}u)=0\,\mbox{and}\,
Ze\,(c^{(2)}u)\stackrel{ (\ref{eq150})}{=}Re\,c^{(2)}\, Ze\,u
>0\stackrel{ (\ref{eq148})}{\Longrightarrow}  \mbox{(\ref{eq149}) holds.}$$

If  $\left\{\begin{array}{l}Re\,u=0,\\Ze\,u>0\end{array}\right.$ and 
$\left\{\begin{array}{l}Re\,c^{(2)}=0,\\Ze\,c^{(2)}>0\end{array}\right.$, then 
$$
Re\,(c^{(2)}u)=0\,\mbox{and}\,
Ze\,(c^{(2)}u)\stackrel{ (\ref{eq150})}{=}0\stackrel{ (\ref{eq148})}{\Longrightarrow}  \mbox{(\ref{eq149}) holds.}$$

This completes the proof of (\ref{eq149}) foe all cases.

\medskip
If $c^{(2)}u=0$, then
\begin{equation}\label{eq152}
 Re\,c^{(2)}Re\,u= 0
\end{equation}
and
\begin{equation}\label{eq153}
  Re\,c^{(2)}\, Ze\,u+Ze\,c^{(2)}\, Re\,u= 0.
\end{equation}

\medskip
By (\ref{eq152}), we have
$$
\mbox{either} \left\{\begin{array}{c}Re\,c^{(2)}\ne 0,\\Re\,u=0; 
\end{array}\right. \mbox{or}\,  \left\{\begin{array}{c}Re\,c^{(2)}= 0,\\Re\,u\ne 0; 
\end{array}\right.  \mbox{or}\,  \left\{\begin{array}{c}Re\,c^{(2)}= 0,\\Re\,u= 0.
\end{array}\right. 
$$

\medskip
If $Re\,c^{(2)}\ne 0$ and $Re\,u=0$, then $ Ze\,u=0$ by  (\ref{eq153}) and  $u=Re\,u+1^{\#}Ze\,u=0$, which is impossible because 
$u>0$. If $Re\,c^{(2)}= 0$ and $Re\,u\ne 0$, then $ Ze\,c^{(2)}=0$ by  (\ref{eq153}) and  $c^{(2)}=Re\,c^{(2)}+1^{\#}Ze\,c^{(2)}=0$, which is impossible because $c^{(2)}>0$. This proves that we have to have 
$Re\,c^{(2)}=0$ and $Re\,u=0$, which implies   
$$c=Re\,c^{(2)}+1^{\#}Ze\,c^{(2)}=1^{\#}Ze\,c^{(2)}\in  \mathcal{R}1^{\#}$$
 and 
$$u=Re\,u+1^{\#}Ze\,u=1^{\#}Ze\,u\in \mathcal{R}1^{\#}.$$
Hence, the property (iv) in Proposition \ref{def2.4} holds.

\hfill\raisebox{1mm}{\framebox[2mm]{}}

\bigskip
We now introduce the following generalization of  the ordinary total ordering on the real vector spaces
in real reflection groups.

\medskip
\begin{definition}\label{def2.5} A {\bf total $\mathcal{R}^{(2)}$-ordering} of a $\mathcal{R}^{(2)}$-module $V$ is a binary relation 
on $V$ (denoted by $<$) satisfying the following axioms.
\begin{description}
\item[(i)] For all $u$, $v$, $w\in V$, if $u\le v$ and $v\le w$, then $u\le w$.
\item[(ii)] For each $u$, $v\in V$ exactly one of $u<v$, $u=v$, $v<u$ holds.
\item[(iii)] $u<v\Longrightarrow u+w<v+w$ for all $u$, $v$, $w\in V$.
\item[(iv)] If  $0<u\in V$ and $c^{(2)}\in \mathcal{R}^{(2)}$,  then $c^{(2)}u\ge 0$ if $c^{(2)}>0$, while 
$c^{(2)}u\le 0$ if $c^{(2)}<0$.
\item[(v)] For  all $u$, $v\in V$,   if  $0<u\notin  Im\,1^{\#}_V$ and $0<v\notin  Im\,1^{\#}_V$ ,  then 
$0<u+v\notin  Im\,1^{\#}_V$.
\end{description}
We say that $u\in V$ is {\bf positive} if $u>0$, and $u\in V$ is {\bf negative} if $u<0$.
\end{definition}

\medskip
By Proposition \ref{def2.4},  the binary relation $<$ defined by  (\ref{eq148}) is a total $\mathcal{R}^{(2)}$-ordering of the 
$\mathcal{R}^{(2)}$-module $\mathcal{R}^{(2)}$.
Let $V$ be a finite dimensional $\mathcal{R}^{(2)}$-module $V$ with an ordered $(\mathcal{R}^{(2)}, \mathcal{R})$-basis:
$(v_1^{(2)},  v_2^{(2)}, \dots, v_n^{(2)}\,\|\, v_{n+1}, v_{n+2},  \dots, v_{n+m} ).$ Clearly, the set
\begin{equation}\label{eq154}
\mathcal{B}=\{u_1, u_2, \dots, u_n, u_{n+1}, u_{n+2},  \dots, u_{n+m}, u_{n+m+1}, u_{n+m+2}, \dots, u_{2n+m}\}
\end{equation}
is an ordered $\mathcal{R}$-basis of the real vector space $V$, where 
$$
u_i:=\left\{\begin{array}{ll} v_i^{(2)} & \mbox{if $1\le i\le n$},\\ v_{i}& 
\mbox{if $n+1\le i\le n+m$},\\1^{\#}v_{i-n-m}^{(2)}&\mbox{if $n+m+1\le i\le 2n+m$}.
\end{array}\right.
$$
Clearly, we have $1^{\#}u_i=u_{n+m+i}$ for $1\le i\le n$ and
\begin{equation}\label{eq155}
Im\,1^{\#}_V=\displaystyle\sum_{i=n+m+1}^{2n+m}\, \mathcal{R}u_i\quad\mbox{and}\quad 
Ker\,1^{\#}_V=\displaystyle\sum_{i=n+1}^{2n+m}\, \mathcal{R}u_i.
\end{equation}

Using the the ordered  $\mathcal{R}$-basis $\mathcal{B}=\{u_i\}_{i=1}^{2n+m}$ of  the real vector space $V$, we get an ordinary 
total ordering $< $ on the real vector space $V$. In other words, if 
$x=\displaystyle\sum_{i=1}^{2n+m}\,x_iu_i$ and $y=\displaystyle\sum_{i=1}^{2n+m}\,y_iu_i$ 
with $x_i$, $y_i\in \mathcal{R}$ for $1\le i\le 2n+m$,  then 
\begin{eqnarray}\label{eq156}
&&\mbox{$x<y$ means that $x_k<y_k$,
where $1\le k\le 2n+m$}\nonumber\\
&&\mbox{and $k$ is the least index $i$ for which $x_i\ne y_i$.}
\end{eqnarray}

\medskip
\begin{proposition}\label{pr2.5} The binary relation defined by  (\ref{eq156}) is a  total $\mathcal{R}^{(2)}$-ordering of
a finite dimensional $\mathcal{R}^{(2)}$-module $V$.
\end{proposition}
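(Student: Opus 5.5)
The plan is to verify the five axioms of Definition \ref{def2.5} directly in coordinates with respect to the ordered $\mathcal{R}$-basis $\mathcal{B}=\{u_i\}_{i=1}^{2n+m}$ of (\ref{eq154}). Two structural facts drive everything. First, $1^{\#}u_i=u_{n+m+i}$ for $1\le i\le n$, while $1^{\#}u_i=0$ for $i>n$ (those $u_i$ lie in $Ker\,1^{\#}_V$ by (\ref{eq155})). Second, (\ref{eq155}) says a vector $x=\sum x_iu_i$ lies in $Im\,1^{\#}_V$ if and only if $x_i=0$ for all $i\le n+m$. For $x\ne 0$, let $k(x)$ denote the least index with $x_{k(x)}\ne 0$. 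Then $x>0$ means $x_{k(x)}>0$, and for $x\neq 0$ we have $x\notin Im\,1^{\#}_V$ if and only if $k(x)\le n+m$.

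Axioms (i), (ii) and (iii) are the standard properties of the lexicographic order on $\mathcal{R}^{2n+m}$ and only involve the real vector space structure. I will simply note them: transitivity, trichotomy, and translation invariance, since $x<y$ depends only on the coordinates of $y-x$. From (iii) one also gets $w>0\Leftrightarrow -w<0$, which is used below.

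For (iv), take $u=\sum x_iu_i>0$ with $k=k(u)$, and $c^{(2)}=a+b1^{\#}$. Then $c^{(2)}u=au+b\,1^{\#}u$. Its coordinates are $ax_i$ for $i\le n+m$, and $ax_i+bx_{i-n-m}$ for $i>n+m$.
\begin{itemize}
\item If $a>0$ and $k\le n+m$, the leading coordinate of $c^{(2)}u$ is $ax_k>0$.
\item If $a>0$ and $k>n+m$, then $u\in Im\,1^{\#}_V\subseteq Ker\,1^{\#}_V$, so $1^{\#}u=0$ and $c^{(2)}u=au>0$.
\item If $a=0$ and $b>0$, then $c^{(2)}u=b\,1^{\#}u$ has coordinates $bx_i$ at positions $n+m+i$ for $i\le n$. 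Either $k\le n$, giving leading coordinate $bx_k>0$ at position $n+m+k$, or $c^{(2)}u=0$. In both cases $c^{(2)}u\ge 0$.
\end{itemize}
The case $c^{(2)}<0$ follows by applying the above to $-c^{(2)}>0$ and using $c^{(2)}u=-((-c^{(2)})u)$ together with the remark after (iii).

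Axiom (v) is the step that really uses the shape of $\mathcal{B}$, since $Im\,1^{\#}_V$ is spanned by the last $n$ basis vectors. This is the point I expect to need the most care, namely translating ``$\notin Im\,1^{\#}_V$'' into ``leading index $\le n+m$''. Once that translation is made, the argument is short. Suppose $u,v>0$ are both outside $Im\,1^{\#}_V$, so $k(u),k(v)\le n+m$ with positive leading coordinates, and set $\ell=\min(k(u),k(v))$. All coordinates of $u+v$ before $\ell$ vanish. The $\ell$-th coordinate of $u+v$ is a sum of nonnegative numbers, at least one of which is strictly positive. Hence $k(u+v)=\ell\le n+m$ and the leading coordinate is positive, so $0<u+v\notin Im\,1^{\#}_V$.
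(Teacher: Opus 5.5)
Your proof is correct and takes the same approach as the paper. The paper also obtains (i)--(iii) from the lexicographic order on the real vector space $V$ with respect to $\mathcal{B}$, and says only that (iv) and (v) can be checked by cases. Your explicit case analysis, together with the translation $x\notin Im\,1^{\#}_V \Longleftrightarrow k(x)\le n+m$, fills in exactly those omitted cases.
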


\medskip
\noindent
{\bf Proof} Since  the binary relation $<$ defined by  (\ref{eq155}) is an ordinary total ordering  on the real vector space $V$,
Definition \ref{def2.5} (i), (ii) and (iii) hold. One can prove Both (iv) and (v)  by cases.

\hfill\raisebox{1mm}{\framebox[2mm]{}}

\bigskip
{\bf Remark} If $V$ be a finite dimensional $\mathcal{R}^{(2)}$-module,  $u\in V$ and  $c^{(2)}\in \mathcal{R}^{(2)}$, then
\begin{equation}\label{eq160}
 \mbox{    $c^{(2)}u=0$ and $Re\,c^{(2)}\ne 0$ $\Longrightarrow$  $u=0$}
\end{equation}
and
\begin{equation}\label{eq161}
 \mbox{  $c^{(2)}u=0$ and  $\left\{\begin{array}{l}Re\,c^{(2)}=0,\\Ze\,c^{(2)}\ne 0,\end{array}\right.$ $\Longrightarrow$  
$u\in Ker 1^{\#}_{V}$.}
\end{equation}
In fact,  if $Re\,c^{(2)}\ne 0$, then the inverse $Re\,c^{(2)})^{-1}$ exists and 
$$u=Re\,c^{(2)})^{-1}\cdot c^{(2)}u=0.$$
If  $\left\{\begin{array}{l}Re\,c^{(2)}=0,\\Ze\,c^{(2)}\ne 0,\end{array}\right.$, then
$
1^{\#}(Ze\,c^{(2)})u=c^{(2)}u=0\Longrightarrow 1^{\#}u=0\Longrightarrow
u\in Ker 1^{\#}_{V}.
$

\bigskip
Let $\Phi=\Phi_{Re}\cup \Phi_{Ze}$ be a $\mathcal{R}^{(2)}$-root system in a finite dimensional  
$\mathcal{R}^{(2)}$-module $V$. 
The elements of $\Phi$, $\Phi_{Re}$ and $\Phi_{Ze}$  are called {\bf $\mathcal{R}^{(2)}$-roots},  
{\bf real $\mathcal{R}^{(2)}$-roots} and  {\bf zero divisor $\mathcal{R}^{(2)}$-roots}, respectively. Let 
$$\Pi:=\{\alpha \, |\, \mbox{$\alpha\in \Phi$ and $\alpha $ is positive relative to some total $\mathcal{R}^{(2)}$-ordering of $V$}\}.$$
Then $\Pi$ is called the {\bf positive $\mathcal{R}^{(2)}$-system} in $\Phi$. By Definition \ref{def2.4} (ii), we have
$$
\Phi=\Pi\cup (-\Pi), \quad \Pi\cap (-\Pi)=\emptyset,
$$
where $-\Pi:=\{-\alpha \,|\, \alpha \in \Pi\}$ and $-\Pi$ is called the  {\bf negative $\mathcal{R}^{(2)}$-system} in $\Phi$.  

\medskip
\begin{definition}\label{def2.6} 
A subset $\Delta$ of  a $\mathcal{R}^{(2)}$-root system $\Phi$ in a finite dimensional  
$\mathcal{R}^{(2)}$-module $V$ is called a 
{\bf simple $\mathcal{R}^{(2)}$-system} if the subset $\Delta$ has the following two properties:
\begin{description}
\item[(i)] $\Delta$  is a $(\mathcal{R}^{(2)}, \mathcal{R})$-basis of the $\mathcal{R}^{(2)}$-module $span^{\#}(\Phi)$, where
 $span^{\#}(\Phi)$, which is called  the  {\bf $(\mathcal{R}^{(2)}, \mathcal{R})$-span} of $\Phi$, consists of  all  
$(\mathcal{R}^{(2)}\,\|\, \mathcal{R})$-linearly combination of  the  dual real vectors of $(\Phi_{Re}\,\|\, \Phi_{Ze})$;
\item[(ii)]   Each $\alpha\in \Phi$ is a $(\mathcal{R}^{(2)}\,\|\, \mathcal{R})$-linearly combination of   $(\Delta\cap\Phi_{Re}\,\|\, \Delta\cap\Phi_{Ze})$ with coefficients all of the same sign (all nonnegative or all nonpositive).
\end{description}
\end{definition}

\medskip
The following proposition proves that simple $\mathcal{R}^{(2)}$-systems exist.

\medskip
\begin{proposition}\label{pr2.6} If $\Phi=\Phi_{Re}\cup \Phi_{Ze}$ is a $\mathcal{R}^{(2)}$-root system in a finite dimensional  
$\mathcal{R}^{(2)}$-module $V$,  and  $\Pi$ is a  positive $\mathcal{R}^{(2)}$-system in $\Phi$, then there exists a unique 
simple $\mathcal{R}^{(2)}$-system  $\Delta$ such that  $\Delta\subseteq \Pi$.
\end{proposition}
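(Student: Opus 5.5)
I will follow the classical argument for root systems in Euclidean spaces (Humphreys, Section 1.3), adapted to the two-part structure $\Phi=\Phi_{Re}\cup\Phi_{Ze}$.

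\textbf{Construction of $\Delta$.} Fix the total $\mathcal{R}^{(2)}$-ordering relative to which $\Pi$ is positive. Call a subset $\Delta\subseteq\Pi$ \emph{admissible} if every $\alpha\in\Pi$ is a $(\mathcal{R}^{(2)}\,\|\,\mathcal{R})$-linear combination of $(\Delta\cap\Phi_{Re}\,\|\,\Delta\cap\Phi_{Ze})$ with nonnegative coefficients. Nonnegative means nonnegative in the order (\ref{eq148}) for the $\mathcal{R}^{(2)}$-coefficients, and ordinary nonnegativity for the real ones. The set $\Pi$ itself is admissible, so a minimal admissible subset $\Delta$ exists. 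Since $\Phi=\Pi\cup(-\Pi)$, every root is then a combination with coefficients of one sign. This gives Definition \ref{def2.6} (ii), and it also shows that $\Delta$ $(\mathcal{R}^{(2)}\,\|\,\mathcal{R})$-spans $span^\#(\Phi)$.

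\textbf{Key lemma.} For distinct $\alpha,\beta\in\Delta$ I aim to show that $\kappa(\alpha,\beta)\le 0$, in the appropriate sense for each pair of types:
\begin{itemize}
\item for two real roots, $\kappa(\alpha,\beta)\le 0$ in $\mathcal{R}^{(2)}$;
\item for a zero divisor root $\beta$, $Ze\,\kappa(\alpha,\beta)\le 0$.
\end{itemize}
These are exactly the quantities appearing in the reflection formulas (\ref{eq132}). Suppose the inequality fails. Then the reflected vector $S_\beta(\alpha)=\alpha-c\beta$ has $c>0$, and it lies in $\Phi$ by Definition \ref{def2.4} (iii).
\begin{itemize}
\item If $S_\beta(\alpha)\in\Pi$, expand it in $\Delta$. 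Comparing the coefficient of $\alpha$ either lets us drop $\alpha$ from $\Delta$, contradicting minimality, or forces a positive multiple of $\alpha$ to be a nonpositive combination.
\item If $S_\beta(\alpha)\in-\Pi$, argue symmetrically with $\beta$.
\end{itemize}
This comparison uses Proposition \ref{pr2.4} (iv),(v) and axiom (v) of Definition \ref{def2.5}. The point of that axiom is that sums of positive elements outside $Im\,1^{\#}_V$ stay positive and outside $Im\,1^{\#}_V$.

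\textbf{Independence.} This is the main obstacle. Suppose $\sum c_i\alpha_i+\sum d_j\beta_j=0$. Split the coefficients by sign and set
$$\sigma=\sum_{c_i>0}c_i\alpha_i+\cdots=\sum_{c_i<0}(-c_i)\alpha_i+\cdots.$$
Then $\kappa(\sigma,\sigma)$ is a sum of terms $\kappa(\alpha_i,\alpha_k)$ with nonnegative multipliers, and so it is $\le 0$ by the key lemma. Positivity (Definition \ref{def1.2} (iv),(v)) then forces $Re\,\kappa(\sigma,\sigma)=0$. Hence $\sigma\in Ker\,1^{\#}_V$, which kills the real parts of the $\mathcal{R}^{(2)}$-coefficients of real roots by the Remark after Proposition \ref{pr1}. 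Next I examine $Ze\,\kappa(\sigma,\sigma)$, which forces $\sigma\in Im\,1^{\#}_V$. This second stage is where zero divisors cause trouble, since a nonzero element of $Im\,1^{\#}_V$ can be "positive" with products equal to $0$. I would try to show that $\sigma\in Im\,1^{\#}_V$ forces the remaining $Ze$-parts and real coefficients to vanish by pleasantness, using Proposition \ref{pr1.2} and the pleasant-basis structure of Proposition \ref{pr1.4}. From this, positivity of $\sigma$ as a nonnegative combination of positive roots should give $\sigma=0$, and then all coefficients vanish.

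\textbf{Uniqueness.} Suppose $\Delta'\subseteq\Pi$ is another simple system. Each $\alpha\in\Delta$ is a nonnegative combination of $\Delta'$, and vice versa. The element $\alpha$ cannot be a nontrivial sum of positives by a minimality/indecomposability argument, so $\alpha\in\Delta'$. Hence $\Delta\subseteq\Delta'$, and by symmetry $\Delta=\Delta'$, with the cardinalities already fixed by $\dim_{\mathcal{R}^{(2)}}$.
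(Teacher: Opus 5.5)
The paper states Proposition~\ref{pr2.6} without proof, so your outline has to stand on its own. Its architecture (a minimal admissible $\Delta\subseteq\Pi$, an obtuseness lemma, independence via $\sigma$) is sound. However, the independence step, which you flag yourself, is not closed, and the route you suggest (pleasantness, then ``positivity of $\sigma$ gives $\sigma=0$'') does not work.

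After your first stage all $Re\,c_i=0$; write $c_i=z_i1^{\#}$. Your $Ze\,\kappa$ computation does give $\sigma\in Im\,1^{\#}_V$. Only $Ze\,\kappa(\beta_j,\beta_l)\le 0$ enters there, because $\kappa(x,k)\in\mathcal{R}1^{\#}$ for $k\in Ker\,1^{\#}_V$, hence $\kappa(1^{\#}\alpha_i,\beta_l)=1^{\#}\kappa(\alpha_i,\beta_l)=0$ and $\kappa(1^{\#}\alpha_i,1^{\#}\alpha_k)=0$. The $d_j$ then vanish by axiom (v) of Definition~\ref{def2.5}, not by pleasantness: $\sum_{d_j>0}d_j\beta_j$ would be a positive vector outside $Im\,1^{\#}_V$ that lies in $Im\,1^{\#}_V$. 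What remains is $\sigma=\sum_{z_i>0}z_i1^{\#}\alpha_i=\sum_{z_i<0}(-z_i)1^{\#}\alpha_i$. Here positivity says nothing: this is in general a nonzero positive vector, and $\kappa$ vanishes identically on $Im\,1^{\#}_V\times Im\,1^{\#}_V$, so no computation with $\kappa(\sigma,\sigma)$ can detect it.

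The missing idea is to strip off $1^{\#}$. Put $\tau_{\pm}=\sum_{\pm z_i>0}(\pm z_i)\alpha_i$ with real coefficients. Then $\tau_+-\tau_-\in Ker\,1^{\#}_V$, so $Re\,\kappa(\tau_+,\tau_+)=Re\,\kappa(\tau_+,\tau_-)\le 0$. Hence $\tau_+\in Ker\,1^{\#}_V$, so $0=1^{\#}\tau_+=\sum_{z_i>0}z_i\,1^{\#}\alpha_i$. Each $1^{\#}\alpha_i>0$ by Definition~\ref{def2.5} (iv), so the index set $\{i: z_i>0\}$ is empty; symmetrically for $\tau_-$, so all $z_i=0$. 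Equivalently, you must prove $\mathcal{R}$-linear independence of the images of the real simple roots in $V/Ker\,1^{\#}_V$ with respect to $Re\,\kappa$. The same positivity of the $1^{\#}\alpha_i$, not the Remark after Proposition~\ref{pr1}, is also what kills the real parts in your first stage.

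Two smaller repairs are needed. First, in the key lemma your dichotomy on the coefficient $k_\alpha$ misses the case $Re\,k_\alpha=1$, $Ze\,k_\alpha<0$. There $1-k_\alpha$ is a nonzero zero divisor, so you can neither divide by it nor obtain a vanishing nonnegative combination. For a real pair with $Re\,\kappa(\alpha,\beta)>0$, multiplying the relation by $1^{\#}$ (reading it in $V/Ker\,1^{\#}_V$) gives the contradiction. In the zero divisor case, axiom (v) handles the leftover $Im\,1^{\#}_V$ terms. This yields exactly what independence uses: $Re\,\kappa(\alpha,\beta)\le 0$ for real pairs and $Ze\,\kappa(\beta,\beta')\le 0$ for zero divisor pairs. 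The full lexicographic inequality in the subcase $Re\,\kappa(\alpha,\beta)=0<Ze\,\kappa(\alpha,\beta)$ is not delivered by your sketch, and it is not needed.

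Second, in the uniqueness step ``indecomposability'' must be made precise, because $k_\gamma m_{\gamma\delta}=0$ does not force $m_{\gamma\delta}=0$ when $k_\gamma$ is a zero divisor. Write $\alpha=\sum_{\gamma\in\Delta'}k_\gamma\gamma$ and $\gamma=\sum_{\delta\in\Delta}m_{\gamma\delta}\delta$. Independence of $\Delta$ gives $k_\gamma m_{\gamma\delta}=0$ for $\delta\ne\alpha$, since these are nonnegative terms summing to $0$. You then need some $\gamma$ with $k_\gamma$ invertible:
\begin{itemize}
\item For real $\alpha$ one exists because $\alpha\notin Ker\,1^{\#}_V$.
\item For a zero divisor root $\alpha$, reading modulo $Ker\,1^{\#}_V$ forces $Re\,k_\gamma=0$ for the real $\gamma$. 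So some zero divisor $\gamma$ has $k_\gamma>0$, because $\alpha\notin Im\,1^{\#}_V$.
\end{itemize}
That $\gamma$ lies in $\mathcal{R}^{(2)}\alpha$, hence $\gamma=\alpha$ by Definition~\ref{def2.4} (ii).
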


\bigskip
\section{Real Hilbert-like Modules}

By \cite{KL2}, the dual real number algebra $\mathcal{R}^{(2)}$ is a normed algebra with respect to the 
norm  $||\,\,||$, where  the norm $||\,\,||$ is defined by 
$$
||x||:=\sqrt{2\,(Re\,x)^2+(Ze\,x)^2}\quad\mbox{for $x\in\mathcal{R}^{(2)}$.}
$$

\medskip
\begin{definition}\label{def3.1} A real vector space $V$ is called a {\bf real Hilbert-like module} 
if it  has the following three properties:
\begin{description}
\item[(i)] $V$ is a real Banach space equipped a norm  $||\,\,||$.
\item[(ii)] $V$ is a real Hu-Liu inner product  $\mathcal{R}^{(2)}$-module equipped  a real Hu-Liu inner 
product $ \kappa(\, ,\,)$.
\item[(iii)]  the norm $||\,\,||$ and the real Hu-Liu inner product $ \kappa(\, ,\,)$ satisfy the 
following inequality:
\begin{equation}\label{eq235}
||\kappa(x, \, y)||\le ||x||\,||y||\quad\mbox{for all $x$, $y\in V$.}
\end{equation}
\end{description}
A real Hilbert-like module $V$ is also denoted by $(V, \, ||\,\,||,\, \kappa(\, , \,))$ to indicate the norm $||\,\,||$ and 
the real Hu-Liu inner product  $\kappa(\, , \,)$ which define the real Hilbert-like module structure on $V$.
\end{definition}

\bigskip
To give the first example of real Hilbert-like modules, we need the following generalization of
Cauchy-Schwartz inequality.

\medskip
\begin{proposition}\label{pr3.1} If $\{x_{i1}, \,y_{i1}, \, x_{i2}, \, y_{i2}\}_{i=1}^n$ and $\{x_{n+j}, \, y_{n+j}\}_{j=1}^m$ are the subsets of 
real number field $\mathcal{R}$, then the following {\bf Hu-Liu inequality} holds:
$$
2\Big(\displaystyle\sum_{i=1}^n x_{i1}y_{i1}\Big)^2+
\Big(\displaystyle\sum_{i=1}^n(x_{i1}y_{i2}+x_{i2}y_{i1})+
\displaystyle\sum_{j=1}^m\,x_{n+j}y_{n+j}\Big)^2
$$
$$
\le\Big[2\displaystyle\sum_{i=1}^n(x_{i1})^2+\displaystyle\sum_{i=1}^n(x_{i2})^2+
\displaystyle\sum_{j=1}^m\,(x_{n+j})^2\Big]
\Big[2\displaystyle\sum_{i=1}^n(y_{i1})^2+\displaystyle\sum_{i=1}^n(y_{i2})^2+\displaystyle\sum_{j=1}^m\,(y_{n+j})^2\Big].
$$
\end{proposition}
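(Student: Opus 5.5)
The plan is to reduce the Hu-Liu inequality to the ordinary Cauchy--Schwarz inequality, applied twice, followed by an elementary comparison of expanded quadratic expressions. To fix notation, write $a=(x_{i1})_{i=1}^n$, $b=(x_{i2})_{i=1}^n$, $c=(x_{n+j})_{j=1}^m$ and $p=(y_{i1})_{i=1}^n$, $q=(y_{i2})_{i=1}^n$, $r=(y_{n+j})_{j=1}^m$. Let $|\cdot|$ and $\langle\,,\rangle$ be the Euclidean norm and dot product. The left-hand side of the statement in Proposition \ref{pr3.1} is then
$$2\langle a,p\rangle^2+\big(\langle a,q\rangle+\langle b,p\rangle+\langle c,r\rangle\big)^2,$$
and the right-hand side is
$$\big(2|a|^2+|b|^2+|c|^2\big)\big(2|p|^2+|q|^2+|r|^2\big).$$

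First, I would apply Cauchy--Schwarz in $\mathcal{R}^n$ and $\mathcal{R}^m$ termwise. This gives $\langle a,p\rangle^2\le |a|^2|p|^2$ and
$$\big|\langle a,q\rangle+\langle b,p\rangle+\langle c,r\rangle\big|\le |a||q|+|b||p|+|c||r|.$$
Next, I would regard the last sum as the dot product of the two vectors $(|a|,|b|,|c|)$ and $(|q|,|p|,|r|)$ in $\mathcal{R}^3$. A second use of Cauchy--Schwarz then bounds its square by
$$\big(|a|^2+|b|^2+|c|^2\big)\big(|p|^2+|q|^2+|r|^2\big).$$

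It remains to compare scalars. Abbreviate $\alpha=|a|^2$, $\gamma=|p|^2$, $\beta=|b|^2+|c|^2$ and $\delta=|q|^2+|r|^2$. The two steps above bound the left-hand side by $2\alpha\gamma+(\alpha+\beta)(\gamma+\delta)$, and the right-hand side equals $(2\alpha+\beta)(2\gamma+\delta)$. Expanding both products, the difference is
$$(2\alpha+\beta)(2\gamma+\delta)-2\alpha\gamma-(\alpha+\beta)(\gamma+\delta)=\alpha\gamma+\alpha\delta+\beta\gamma\ge 0,$$
since all four quantities are nonnegative. This proves the inequality.

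The only step needing care is the pairing in the second Cauchy--Schwarz application. The cross terms pair $a$ with $q$ and $b$ with $p$, so the naive pairing $(|a|,|b|,|c|)\cdot(|p|,|q|,|r|)$ is wrong; one must swap the first two entries of the $y$-side vector. Once that is done, the factor $2$ on the real parts leaves exactly enough slack, namely $\alpha\gamma+\alpha\delta+\beta\gamma$, to absorb the loss from the cruder bounds.
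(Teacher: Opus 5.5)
Your proof is correct and complete. The termwise Cauchy--Schwarz bounds are valid, and so is the second application in $\mathcal{R}^3$ with the swapped pairing $(|a|,|b|,|c|)\cdot(|q|,|p|,|r|)$. The final identity $(2\alpha+\beta)(2\gamma+\delta)-2\alpha\gamma-(\alpha+\beta)(\gamma+\delta)=\alpha\gamma+\alpha\delta+\beta\gamma$ also checks out, and the degenerate cases $n=0$ or $m=0$ cause no trouble.

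The paper gives no proof of its own here. It only calls the inequality a straightforward computation and defers to \cite{KL}, so there is no worked-out route to compare yours against. A direct expand-and-regroup computation, which is presumably what is meant, would have to write the difference of the two sides as a manifestly nonnegative expression, Lagrange-identity style. Your reduction avoids that bookkeeping. It also shows where the weight $2$ on the real parts matters: it supplies the slack $\alpha\gamma+\alpha\delta+\beta\gamma$ that absorbs the losses from the two Cauchy--Schwarz steps.
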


\medskip
\noindent
{\bf Proof}  It is a straightforward computation.  See \cite{KL} for details.

\hfill\raisebox{1mm}{\framebox[2mm]{}}

\medskip
A real Hilbert-like module $V$ is called {\bf finite dimensional} if $V$ is a finite dimensional $\mathcal{R}^{(2)}$-module. A
real Hilbert-like module $V$ that is not finite dimensional is called {\bf infinite dimensional}.
There are two basic examples of real Hilbert-like modules. The first one is a  finite dimensional real Hilbert-like module, which is given in the following

\medskip
\begin{proposition}\label{pr3.2}  The real $(n, m)$-dimensional $\mathcal{R}^{(2)}$-module $\mathcal{R}^{(2)n,m}$ is a 
real Hilbert-like module with respect to the norm  $||\,\,||_{n,m}$ and the standard real Hu-Liu inner 
product $ \kappa_{n,m}(\, ,\,)$, where  $ \kappa_{n,m}(\, ,\,)$ is defined by (\ref{eq1}) and $||\,\,||_{n,m}$ is defined by
\begin{equation}\label{eq236}
||x||_{n,m}:=\Big(2\displaystyle\sum_{i=1}^n (x_{i1})^2+
\displaystyle\sum_{i=1}^n (x_{i2})^2+\displaystyle\sum_{j=1}^m (x_{n+j})^2\Big)^{\frac12}
\end{equation}
for $
x=\displaystyle\sum_{i=1}^n (x_{i1}+1^{\#}x_{i2})e_i^{(2)}+\sum_{j=1}^m\,x_{n+j}e_{n+j}\in \mathcal{R}^{(2)n,m},
$
where  $x_{i1}$, $x_{i2}$, $x_{n+j}\in \mathcal{R}$ with
$1\le i\le n$ and $1\le j\le m$ and  $(\{e_i^{(2)}\}_{i=1}^n\,\|\, \{e_{n+j}\}_{j=1}^m)$ is   the 
standard $(\mathcal{R}^{(2)}, \mathcal{R})$-basis of $\mathcal{R}^{(2)n, m}$.
\end{proposition}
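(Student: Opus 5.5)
The argument checks the three conditions of Definition \ref{def3.1} one at a time; only condition (iii) needs real work. I take the real vector space underlying $\mathcal{R}^{(2)n,m}$ to be $\mathcal{R}^{2n+m}$, with real coordinates $(x_{11},\dots,x_{n1},x_{12},\dots,x_{n2},x_{n+1},\dots,x_{n+m})$. This is the coordinatization attached to the ordered real basis (\ref{eq154}) of the standard basis.

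For (i), I observe that $||x||_{n,m}$ in (\ref{eq236}) is the Euclidean norm of the vector $(\sqrt2\,x_{11},\dots,\sqrt2\,x_{n1},x_{12},\dots,x_{n2},x_{n+1},\dots,x_{n+m})$. This vector is the image of $x$ under an invertible diagonal linear map of $\mathcal{R}^{2n+m}$. So $||\,\,||_{n,m}$ is a norm equivalent to the usual Euclidean one, and completeness is inherited because the space is finite dimensional. For (ii), the paper has already noted that $\kappa_{n,m}$ is a real Hu-Liu inner product. To be self-contained I would still verify the five axioms of Definition \ref{def1.2} directly from (\ref{eq1}).
\begin{description}
\item[Axioms (i)--(iii).] Additivity, $\mathcal{R}^{(2)}$-homogeneity in the first slot and symmetry hold because the formula is bilinear over $\mathcal{R}^{(2)}$ in the coordinates $x^i$, and because $1^{\#}\cdot(\text{real scalars})$ behaves correctly under multiplication by $a\in\mathcal{R}^{(2)}$, since $a\,1^{\#}=(Re\,a)1^{\#}$.
\item[Axiom (iv).] We have $Re\,\kappa_{n,m}(x,x)=\sum_i (x_{i1})^2$. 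This vanishes exactly when all $x_{i1}=0$, which is exactly the condition $1^{\#}x=0$, i.e.\ $x\in Ker\,1^{\#}_V$.
\item[Axiom (v).] On $Ker\,1^{\#}_V$ we have $Ze\,\kappa_{n,m}(x,x)=\sum_j (x_{n+j})^2\ge0$. This vanishes exactly when $x$ has only $x_{i2}$-components, i.e.\ $x\in Im\,1^{\#}_V$.
\end{description}

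For (iii), I write $x$ and $y$ as in (\ref{eq2}) and (\ref{eq3}), with $x_{i1}=x^{i1}$ and so on. By (\ref{eq1}),
\[
Re\,\kappa_{n,m}(x,y)=\sum_i x_{i1}y_{i1},\qquad
Ze\,\kappa_{n,m}(x,y)=\sum_i(x_{i1}y_{i2}+x_{i2}y_{i1})+\sum_j x_{n+j}y_{n+j}.
\]
The norm on $\mathcal{R}^{(2)}$ then gives
\[
||\kappa_{n,m}(x,y)||^2=2\Big(\sum_i x_{i1}y_{i1}\Big)^2+\Big(\sum_i(x_{i1}y_{i2}+x_{i2}y_{i1})+\sum_j x_{n+j}y_{n+j}\Big)^2 .
\]
The product $||x||_{n,m}^2\,||y||_{n,m}^2$ is exactly the right-hand side of the Hu-Liu inequality in Proposition \ref{pr3.1}. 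So (\ref{eq235}) follows from Proposition \ref{pr3.1} by taking square roots.

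I expect the main obstacle to be this identification in (iii): matching the real and zero-divisor parts of $\kappa_{n,m}(x,y)$ with the two squared terms on the left of Proposition \ref{pr3.1}. Once that is done the result is immediate, since everything else is routine finite-dimensional bookkeeping.
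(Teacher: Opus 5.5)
Your proposal is correct and follows the paper's proof. The paper gets (i) by observing that $||\,\,||_{n,m}$ is induced by the weighted real inner product $<x,y>_{n,m}=2\sum_i x_{i1}y_{i1}+\sum_i x_{i2}y_{i2}+\sum_j x_{n+j}y_{n+j}$, which is equivalent to your diagonal rescaling. It gets (iii) exactly as you do, by matching $||\kappa_{n,m}(x,y)||^2$ and $||x||_{n,m}^2\,||y||_{n,m}^2$ with the two sides of the Hu-Liu inequality. Your explicit check of the Hu-Liu inner product axioms fills in what the paper only calls easy. The fact that your coordinate order differs from the basis $\mathcal{B}$, which lists the $e_{n+j}$ before the $1^{\#}e_i^{(2)}$, is harmless.
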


\medskip
\noindent
{\bf Proof} We define $<\, ,\,>_{n,m}: \mathcal{R}^{(2)n,m}\times \mathcal{R}^{(2)n,m}\to \mathcal{R}$ by
\begin{equation}\label{eq237}
<x, y>_{n,m}:=2\displaystyle\sum_{i=1}^n x_{i1}y_{i1}+\displaystyle\sum_{i=1}^n x_{i2}y_{i2}
+\displaystyle\sum_{j=1}^m x_{n+j}y_{n+j}
\end{equation}
for 
$$
x=\displaystyle\sum_{i=1}^n (x_{i1}+1^{\#}x_{i2})e_i^{(2)}+\sum_{j=1}^m\,x_{n+j}e_{n+j}\in \mathcal{R}^{(2)n,m},
$$
$$
y=\displaystyle\sum_{i=1}^n (y_{i1}+1^{\#}y_{i2})e_i^{(2)}+\sum_{j=1}^m\,y_{n+j}e_{n+j}\in \mathcal{R}^{(2)n,m},
$$
where  $x_{i1}$, $x_{i2}$, $y_{i1}$, $y_{i2}$, $x_{n+j}$,  $y_{n+j}\in \mathcal{R}$ with
$1\le i\le n$ and $1\le j\le m$ and  $(\{e_i^{(2)}\}_{i=1}^n\,\|\, \{e_{n+j}\}_{j=1}^m)$ is   the 
standard $(\mathcal{R}^{(2)}, \mathcal{R})$-basis of $\mathcal{R}^{(2)n, m}$.  Then $<\,, \,>_{n,m}$ is a real inner product
on $\mathcal{R}^{(2)n,m}$. 

Since $\mathcal{R}^{(2)n,m}$ is a $(2n+m)$-dimensional real vector space, the real inner product space  $\mathcal{R}^{(2)n,m}$ 
is a  real Banach space equipped the norm  $||\,\,||_{n,m}$, where
\begin{equation}\label{eq238}
||x||_{n,m}:=\sqrt{<x, x>_{n,m}}\stackrel{(\ref{eq237})}{=}\Big(2\displaystyle\sum_{i=1}^n (x_{i1})^2+
\displaystyle\sum_{i=1}^n (x_{i2})^2+\displaystyle\sum_{j=1}^m (x_{n+j})^2\Big)^{\frac12} 
\end{equation}
for $x=\displaystyle\sum_{i=1}^n (x_{i1}+1^{\#}x_{i2})e_i^{(2)}+\sum_{j=1}^m\,x_{n+j}e_{n+j}\in \mathcal{R}^{(2)n,m}$.  Hence, we have
$$
||<x, y>_{n,m}||^2\stackrel{(\ref{eq1})}{=}2\Big(\displaystyle\sum_{i=1}^n x_{i1}y_{i1}\Big)^2+
\Big(\displaystyle\sum_{i=1}^n(x_{i1}y_{i2}+x_{i2}y_{i1})+
\displaystyle\sum_{j=1}^m\,x_{n+j}y_{n+j}\Big)^2
$$
and
$$\Big(||x||_{n,m}\cdot||y||_{n,m}\Big)^2=(||x||_{n,m})^2\cdot (||y||_{n,m})^2$$
$$\stackrel{(\ref{eq238})}{=}\Big[2\displaystyle\sum_{i=1}^n(x_{i1})^2+\displaystyle\sum_{i=1}^n(x_{i2})^2+
\displaystyle\sum_{j=1}^m\,(x_{n+j})^2\Big]
\Big[2\displaystyle\sum_{i=1}^n(y_{i1})^2+\displaystyle\sum_{i=1}^n(y_{i2})^2+\displaystyle\sum_{j=1}^m\,(y_{n+j})^2\Big].$$
It follows from Hu-Liu inequality that 
$$||<x, y>_{n,m}||^2\le \Big(||x||_{n,m}\cdot||y||_{n,m}\Big)^2,$$
which proves that Definition \ref{def3.1} (iii) holds.

\hfill\raisebox{1mm}{\framebox[2mm]{}}

\bigskip
We now introduce the second basic example $\ell^{2\#}$ of real Hilbert-like modules, which is the counterpart of the Hilbert 
space $\ell^{2}$. For $\{x_i^{(2)}\}_{i=1}^{\infty}\subseteq \mathcal{R}^{(2)}$ and 
$\{x_i\}_{i=1}^{\infty}\subseteq \mathcal{R}$, we use $(x_i^{(2)}\,|\, x_i1^{\#})$ to denote 
$(x_1^{(2)}, x_2^{(2)}, \dots \,|\, x_11^{\#}, x_21^{\#}, \dots)$.
Let
$$
\ell^{2\#}:=\Big\{(x_i^{(2)}\,|\, x_i1^{\#})\,|\, \mbox{$\displaystyle\sum_{i=1}^{\infty} ||x_i^{(2)}||^2<\infty$ and 
$\displaystyle\sum_{i=1}^{\infty} |x_i|^2<\infty$} \Big\}.
$$
We define
\begin{equation}\label{eq239}
(x_i^{(2)}\,|\, x_i1^{\#})+(y_i^{(2)}\,|\, y_i1^{\#}):=(x_i^{(2)}+y_i^{(2)}\,|\, x_i1^{\#})+y_i1^{\#}
\end{equation}
and 
\begin{equation}\label{eq240}
r^{(2)}\,(x_i^{(2)}\,|\, x_i1^{\#}):=(r^{(2)}x_i^{(2)}\,|\, r^{(2)}x_i1^{\#})=(r^{(2)}x_i^{(2)}\,|\, (Re\,r^{(2)})x_i1^{\#})
\end{equation}
for $(x_i^{(2)}\,|\, x_i1^{\#}), \, (y_i^{(2)}\,|\, y_i1^{\#})\in \ell^{2\#}$ and $r^{(2)}\in \mathcal{R}^{(2)}$. Clearly, both
 (\ref{eq236}) and  (\ref{eq236}) are well-defined.
Hence, $\ell^{2\#}$ becomes a  $\mathcal{R}^{(2)}$-module under the  addition  defined by (\ref{eq239})  
 and the scalar multiplication  defined by (\ref{eq240}).

\medskip
\begin{proposition}\label{pr3.3}  The $\mathcal{R}^{(2)}$-module $\ell^{2\#}$ is a 
infinite dimensional real Hilbert-like module with respect to the  norm $||\, \,||_{_{2\#}}$ and the  real Hu-Liu inner 
product $ \kappa_{_{2\#}}(\, ,\,)$, where  $||\, \,||_{_{2\#}}$ and $\kappa_{_{2\#}}(\, ,\,)$ are defined by
\begin{equation}\label{eq241}
||x||_{_{2\#}}:=\Big(2\displaystyle\sum_{i=1}^{\infty} (x_{i1})^2+\displaystyle\sum_{i=1}^{\infty}  (x_{i2})^2
+\displaystyle\sum_{i=1}^{\infty}  (x_i)^2\Big)^{\frac12}
\end{equation}
and
\begin{equation}\label{eq242}
\kappa_{_{2\#}}(x,\, y):=
\displaystyle\sum_{i=1}^{\infty} x_{i1}y_{i1}+
1^{\#}\Big(\displaystyle\sum_{i=1}^{\infty}(x_{i1}y_{i2}+x_{i2}y_{i1})+\displaystyle\sum_{i=1}^{\infty}  x_iy_i\Big)
\end{equation}
for $x=(x_{i1}+1^{\#}x_{i2}\,|\, x_i1^{\#})\in \ell^{2\#}$ and  $y=(y_{i1}+1^{\#}y_{i2}\,|\, y_i1^{\#})\in \ell^{2\#}$ with
$\{x_{i1}, x_{i2}, x_i, y_{i1}, y_{i2}, y_i\}_{i=1}^{\infty}\subseteq  \mathcal{R}$.
\end{proposition}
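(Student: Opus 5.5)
We have to check the three properties of Definition \ref{def3.1} for $\ell^{2\#}$, plus infinite dimensionality, following the pattern of the proof of Proposition \ref{pr3.2}. The idea is to identify $\ell^{2\#}$ with a real $\ell^2$-type space.

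\textbf{Step 1: Banach space.} Write $x=(x_{i1}+1^{\#}x_{i2}\,|\,x_i1^{\#})$. Since $||x_i^{(2)}||^2=2x_{i1}^2+x_{i2}^2$, the defining conditions of $\ell^{2\#}$ say exactly that the three real sequences $(x_{i1})$, $(x_{i2})$, $(x_i)$ lie in $\ell^2$. Define
$$<x,y>_{2\#}:=2\sum_{i=1}^\infty x_{i1}y_{i1}+\sum_{i=1}^\infty x_{i2}y_{i2}+\sum_{i=1}^\infty x_iy_i .$$
This converges absolutely by the ordinary Cauchy--Schwarz inequality and is a real inner product whose norm is $||\,\,||_{2\#}$ from (\ref{eq241}). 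The map $x\mapsto(\sqrt2\,x_{i1},x_{i2},x_i)$ is an isometric isomorphism onto $\ell^2\oplus\ell^2\oplus\ell^2$, so completeness follows from completeness of $\ell^2$. Thus $\ell^{2\#}$ is a real Hilbert space, in particular a Banach space.

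\textbf{Step 2: Hu-Liu inner product.} All series in (\ref{eq242}) converge absolutely by Cauchy--Schwarz, so $\kappa_{2\#}$ is well defined.
\begin{itemize}
\item Additivity (i) and symmetry (iii) are immediate.
\item For (ii), take $a=a_1+a_21^{\#}$. Then $ax$ has coordinates $a_1x_{i1}+1^{\#}(a_1x_{i2}+a_2x_{i1})$ and $a_1x_i1^{\#}$. Expanding shows $\kappa_{2\#}(ax,y)=a_1\kappa_{2\#}(x,y)+1^{\#}a_2\sum x_{i1}y_{i1}=a\,\kappa_{2\#}(x,y)$.
\item For (iv) and (v), compute the kernel and image. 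We have $1^{\#}x=(1^{\#}x_{i1}\,|\,0)$. Hence $Ker\,1^{\#}_V=\{x: x_{i1}=0\ \forall i\}$ and $Im\,1^{\#}_V=\{x: x_{i1}=0,\ x_i=0\ \forall i\}$; the sequences $(x_{i1})\in\ell^2$ make this image exactly right.
\item Now $Re\,\kappa(x,x)=\sum x_{i1}^2\ge0$, with equality iff $x\in Ker$. For $x\in Ker$, $Ze\,\kappa(x,x)=\sum x_i^2\ge0$, with equality iff $x\in Im$.
\end{itemize}

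\textbf{Step 3: inequality (\ref{eq235}).} First apply Proposition \ref{pr3.1} to truncations at index $N$: the $N$-th partial sums satisfy the Hu-Liu inequality with $n=m=N$. Then let $N\to\infty$. Both sides are continuous functions of convergent series, so the inequality passes to the limit and gives $||\kappa_{2\#}(x,y)||^2\le ||x||_{2\#}^2||y||_{2\#}^2$.

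\textbf{Step 4: infinite dimensionality.} The elements $e_k^{(2)}$ (a $1$ in the $k$-th $\mathcal{R}^{(2)}$ slot, zeros elsewhere) form an infinite $(\mathcal{R}^{(2)}\|\mathcal{R})$-linearly independent family lying outside $Ker\,1^{\#}_V$. If $\ell^{2\#}$ had a finite basis of dimension $(n,m)$, it would be isomorphic to $\mathcal{R}^{(2)n,m}$, a $(2n+m)$-dimensional real space. That is impossible, since the $e_k^{(2)}$ are real-linearly independent.

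The main obstacle is the kernel/image identification in Step 2. Property (v) requires the set where $Ze\,\kappa(x,x)=0$ on the kernel to coincide exactly with $Im\,1^{\#}_V$; this holds because $\ell^{2\#}$ is built so that $1^{\#}$ maps the $\mathcal{R}^{(2)}$ coordinates onto all of $\ell^2$. The limit argument in Step 3 is routine.
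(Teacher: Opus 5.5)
Your proposal is correct and takes essentially the paper's route: you introduce the auxiliary real inner product $<x,y>_{2\#}$ to get the Hilbert (hence Banach) structure, verify the Hu-Liu axioms for $\kappa_{2\#}$, and deduce inequality (\ref{eq235}) from the Hu-Liu inequality of Proposition \ref{pr3.1}. You also fill in three details the paper leaves implicit, and all are sound: the explicit identification of $Ker\,1^{\#}$ and $Im\,1^{\#}$ for axioms (iv)--(v), the truncation-and-limit step that carries the finite Hu-Liu inequality over to infinite sums, and the check that $\ell^{2\#}$ is infinite dimensional.
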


\medskip
\noindent
{\bf Proof} We define $<\,,\,>_{_{2\#}}:  \ell^{2\#}\times \ell^{2\#}\to \mathcal{R}$ by
\begin{equation}\label{eq243}
<x,\, y>_{_{2\#}}:=2\displaystyle\sum_{i=1}^{\infty} x_{i1} y_{i1}+\displaystyle\sum_{i=1}^{\infty}   x_{i2}y_{i2}
+\displaystyle\sum_{i=1}^{\infty}  x_iy_i
\end{equation}
for $x=(x_{i1}+1^{\#}x_{i2}\,|\, x_i1^{\#})\in \ell^{2\#}$ and  $y=(y_{i1}+1^{\#}y_{i2}\,|\, y_i1^{\#})\in \ell^{2\#}$ with
$\{x_{i1}, x_{i2}, x_i, y_{i1}, y_{i2}, y_i\}_{i=1}^{\infty}\subseteq  \mathcal{R}$. 

\medskip
One can check that both (\ref{eq242}) and (\ref{eq243}) are well-defined, $\ell^{2\#}$ is a Hilbert space equipped 
the norm  $||\,\,||_{_{2\#}}$ given by (\ref{eq241}), and $\ell^{2\#}$ is a real Hu-Liu inner product  
$\mathcal{R}^{(2)}$-module equipped  a real Hu-Liu inner product $\kappa_{_{2\#}}(\,,\, \,)$  given by (\ref{eq242}). 
Finally, it follows from  Hu-Liu inequality that
$$
||\kappa_{_{2\#}}(x,\, y)||\le ||x||_{_{2\#}}\cdot ||y||_{_{2\#}}.
$$

This completes the proof of Proposition \ref{pr3.3}.

\hfill\raisebox{1mm}{\framebox[2mm]{}}

\bigskip
The Riesz representation theorem in Hilbert spaces has a natural counterpart in Hilbert-like modules. For
convenience, we give  the counterpart of the Riesz representation theorem for 
finite dimensional Hilbert-like modules in the following

\medskip
\begin{proposition}\label{pr3.4}  Let $(V, \, ||\,\,||,\, \kappa(\, , \,))$ be a $(n, m)$-dimensional  Hilbert-like module. If 
$\varphi: V\to \mathcal{R}^{(2)}$ is a $\mathcal{R}^{(2)}$-module map, there exists a unique element $\xi\in V$ such 
that $\varphi (x)=\kappa(x , \xi)$ for all $x\in V$.
\end{proposition}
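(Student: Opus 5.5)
The plan is to reduce the statement to an orthonormal basis and define $\xi$ by its coordinates, exactly as in the classical finite dimensional Riesz theorem. The only new point is to check that the coefficients attached to the ``type 2'' basis vectors can be chosen real.

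\emph{Orthonormal basis.} By Proposition \ref{pr1} and $\dim_{\mathcal{R}^{(2)}}(V)=(n,m)$, $V$ has a $(\mathcal{R}^{(2)},\mathcal{R})$-basis $\beta=(\{w_i^{(2)}\}_{i=1}^n\,\|\,\{w_{n+j}\}_{j=1}^m)$. Applying the Hu-Liu extension of the Gram-Schmidt process (Proposition \ref{pr1.3}) gives an $\mathcal{R}^{(2)}$-orthonormal set $\gamma=(\{v_i^{(2)}\}_{i=1}^n\,\|\,\{v_{n+j}\}_{j=1}^m)$ with $span^\#(\gamma)=V$. Concretely,
\[
\kappa(v_i^{(2)},v_i^{(2)})=1,\qquad \kappa(v_{n+j},v_{n+j})=1^{\#},
\]
and all mixed values of $\kappa$ vanish. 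Every $x\in V$ can then be written as $x=\sum_i a_i v_i^{(2)}+\sum_j r_j v_{n+j}$ with $a_i\in\mathcal{R}^{(2)}$ and $r_j\in\mathcal{R}$. Since only the spanning property of $\gamma$ is used below, I do not need to check separately that $\gamma$ is a basis.

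\emph{Construction of $\xi$.} The key observation concerns $\varphi(v_{n+j})$. Because $v_{n+j}\in Ker\,1^{\#}_V$, we have
\[
1^{\#}\varphi(v_{n+j})=\varphi(1^{\#}v_{n+j})=0,
\]
so $Re\,\varphi(v_{n+j})=0$. Hence $\varphi(v_{n+j})=b_j1^{\#}$ for some real $b_j$. This is the step I expect to be the main obstacle: one has to see that the value of $\varphi$ on type 2 vectors is forced into $\mathcal{R}1^{\#}$, so that it matches $\kappa(v_{n+j},v_{n+j})=1^{\#}$ with a real coefficient. I then set
\[
\xi:=\sum_{i=1}^n\varphi(v_i^{(2)})\,v_i^{(2)}+\sum_{j=1}^m b_j\,v_{n+j}.
\]
By Definition \ref{def1.2} (i)--(iii), $\kappa$ is $\mathcal{R}^{(2)}$-linear in each variable, using symmetry for the second one. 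Orthonormality then gives $\kappa(v_i^{(2)},\xi)=\varphi(v_i^{(2)})$ and $\kappa(v_{n+j},\xi)=b_j1^{\#}=\varphi(v_{n+j})$. Both $x\mapsto\varphi(x)$ and $x\mapsto\kappa(x,\xi)$ are $\mathcal{R}^{(2)}$-module maps and they agree on the spanning set $\gamma$. Therefore $\varphi(x)=\kappa(x,\xi)$ for all $x\in V$.

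\emph{Uniqueness.} If $\xi,\xi'$ both work, set $\eta=\xi-\xi'$, so that $\kappa(x,\eta)=0$ for all $x$.
\begin{itemize}
\item Taking $x=\eta$ gives $Re\,\kappa(\eta,\eta)=0$, so $\eta\in Ker\,1^{\#}_V$ by Definition \ref{def1.2} (iv). Then $Ze\,\kappa(\eta,\eta)=0$ gives $\eta\in Im\,1^{\#}_V$ by (v).
\item Write $\eta=1^{\#}w$. Then $0=\kappa(w,\eta)=1^{\#}\kappa(w,w)$, which forces $Re\,\kappa(w,w)=0$. By (iv), $w\in Ker\,1^{\#}_V$, and therefore $\eta=1^{\#}w=0$.
\end{itemize}
Hence $\xi=\xi'$, which proves uniqueness.
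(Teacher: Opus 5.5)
Your proof is correct. Uniqueness follows the paper's argument, with one small improvement. You first use Definition \ref{def1.2} (iv) to get $\xi-\xi'\in Ker\,1^{\#}_V$, and only then invoke (v). The paper skips this step, but it is needed: the ``if and only if'' in (v) can only hold for vectors in $Ker\,1^{\#}_V$. For example, in $\mathcal{R}^{(2)}$ with $\kappa(x,y)=xy$, one has $Ze\,\kappa(1,1)=0$ although $1\notin Im\,1^{\#}_V$.

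For existence your route is genuinely different. The paper only sketches this part. It splits into the cases $Im\,\varphi=\mathcal{R}^{(2)}$ and $Im\,\varphi=\mathcal{R}1^{\#}$ and says each follows by applying the Hu-Liu Gram-Schmidt process ``a few times''. This presumably follows the classical proof through the orthogonal complement of the kernel of $\varphi$. Such an argument needs the case split, because that kernel need not be pleasant, so Proposition \ref{pr1.5} does not apply directly. For instance, for $\varphi(x)=1^{\#}x$ on $\mathcal{R}^{(2)}$ the kernel is $Im\,1^{\#}_V$, and the representing vector is $\xi=1^{\#}\in Im\,1^{\#}_V$.

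Your coordinate construction uses Gram-Schmidt once, needs no case distinction, and gives an explicit formula for $\xi$. Its only delicate point is exactly the one you isolate: $\varphi(v_{n+j})\in\mathcal{R}1^{\#}$, so the real number $b_j$ can serve as the coefficient of $v_{n+j}$. Using $\varphi(v_{n+j})$ itself would fail, since $\varphi(v_{n+j})\,v_{n+j}=0$.
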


\medskip
\noindent
{\bf Proof} Although the proof of
the Riesz representation theorem for finite dimensional Hilbert spaces is short, the proof of  the counterpart 
of the Riesz representation theorem  for finite dimensional Hilbert-like modules is long. 

\medskip
\underline{Uniqueness:} Suppose that $\kappa(x , \xi)=\kappa(x , \xi')$ for all $x\in V$, where $xi$, $xi'\in  V$. Then
\begin{equation}\label{eq244}
\kappa(x , \xi-\xi')=0\quad  \mbox{for all $x\in V$.} 
\end{equation}
Hence, $\kappa(\xi-\xi' , \xi-\xi')=0$, which implies that  $Ze\,\kappa(\xi-\xi' , \xi-\xi')=0$. This fact and 
Definition \ref{def1.2} (v) give $\xi-\xi'\in Im\,1^{\#}_V$ or $\xi-\xi'=1^{\#}v$ for some $v\in V$. It follows that
\begin{eqnarray*}
&& 1^{\#}\kappa(v , v)=\kappa(v , \,1^{\#}v)=\kappa(v , \,\xi-\xi')\stackrel{(\ref{eq244})}{=}0\\
&\Longrightarrow& Re\,\kappa(v , v)=0\Longrightarrow v\in  Ker\,1^{\#}_V\\
&\Longrightarrow& \xi-\xi'=1^{\#}v=0\quad\mbox{or}\quad \xi=\xi'.
\end{eqnarray*}

\medskip
 \underline{Existence:} If $\varphi=0$,  we take $\xi=0$. Therefore, we can assume 
$\varphi\ne 0$.
Since $Im\,\varphi:=\{\varphi (v) \,|\, v\in V\}$ is a submodule of the $\mathcal{R}^{(2)}$-module
$\mathcal{R}^{(2)}$, we have either $Im\,\varphi=\mathcal{R}1^{\#}$ or $Im\,\varphi=\mathcal{R}^{(2)}$. In each of these two cases, Proposition \ref{pr3.4}
follows from
using the  Hu-Liu extension of the Gram-Schmidt 
process a few times.

\hfill\raisebox{1mm}{\framebox[2mm]{}}

\bigskip
\section{Zero divisor representation of finite group}

Let $V$ be a $(n, m)$-dimensional $\mathcal{C}^{(2)}$-module, where  $\mathcal{C}^{(2)}$ is the dual complex number algebra. Let
$$
GL^{\#}_{n,m}(V):=\{ T\,|\, \mbox{$T: V\to V$ is a $\mathcal{C}^{(2)}$ isomorphism}\}
$$
and
$$
GL^{\#}_{n,m}(\mathcal{C}^{(2)}):=\{ X\,|\, \mbox{$X\in M_{n,m}^{\#}(\mathcal{C}^{(2)})$ and $X$ is invertible}\}.
$$
Then both $(GL^{\#}_{n,m}(V),\, \cdot)$ and $(GL^{\#}_{n,m}(\mathcal{C}^{(2)}), \, \#)$ are groups, where the 
group product $\cdot$ for
$GL^{\#}_{n,m}(V)$ is the composition of maps and the group product $\#$ for $GL^{\#}_{n,m}(\mathcal{C}^{(2)})$ is the 
Hu-Liu matrix product.  A basic fact about these groups is  that the two groups $(GL^{\#}_{n,m}(V),\, \cdot)$ and 
$(GL^{\#}_{n,m}(\mathcal{C}^{(2)}), \, \#)$ are isomorphic.

\medskip
In this section,  $G$ always denotes a finite group.  We introduce the concept of zero divisor representation of finite groups 
in the following

\medskip
\begin{definition}\label{def4.1} Let $V$ be a  $(n, m)$-dimensional $\mathcal{C}^{(2)}$-module, and let $G$ be a finite 
group. A {\bf  zero divisor complex representation } of $G$ in $V$  is a homomorphism $\rho$ from the group $G$ into 
the group $GL^{\#}_{n,m}(V)$ or the group $GL^{\#}_{n,m}(\mathcal{C}^{(2)})$, in which case, we also say that $V$ is a
{\bf $G$-invariant  $\mathcal{C}^{(2)}$-module determined by $\rho$}.
\end{definition}

\medskip
Let $\rho$ be a zero divisor complex representation  of $G$ in a finite dimensional $\mathcal{C}^{(2)}$-module $V$.
If $U$ is a  $\mathcal{C}^{(2)}$-submodule $U$ of $V$ and $ \rho (x)(U)\subseteq U$ for all $x\in G$, then $U$ is called a 
{\bf $G$-invariant  $\mathcal{C}^{(2)}$-submodule} of $V$. Clearly,  each $\mathcal{C}^{(2)}$-submodule in the set
$\left\{\{0\}, \, Im\,1^{\#}_V, \,Ker\,1^{\#}_V, \, V \right\}$ is a $G$-invariant  $\mathcal{C}^{(2)}$-submodule of $V$.

\medskip
To introduce the building blocks in the zero divisor complex 
representation  of finite groups, we need the following version of Maschke’s theorem.

\medskip
\begin{proposition}\label{pr4.1} Let $\rho$ be a zero divisor complex representation  of a finite group $G$ in a finite dimensional 
$\mathcal{C}^{(2)}$-module $V$. If U is a $G$-invariant  $\mathcal{C}^{(2)}$-submodule of $V$
and there
exists a   $\mathcal{C}^{(2)}$-submodule W of $V$ such that $V = U \oplus W$ as 
$\mathcal{C}^{(2)}$-modules, then U has a $G$-invariant $\mathcal{C}^{(2)}$-submodule complement, i.e., there
exists a  $G$-invariant $\mathcal{C}^{(2)}$-submodule $\overline{U}$ such that 
$V = U \oplus \overline{U}$ as $G$-invariant  $\mathcal{C}^{(2)}$-modules.
\end{proposition}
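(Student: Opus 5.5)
We follow the classical averaging proof of Maschke's theorem, carried out with $\mathcal{C}^{(2)}$-module maps instead of linear maps. Since $V=U\oplus W$ as $\mathcal{C}^{(2)}$-modules, there is a projection $\pi_0:V\to V$ with $\pi_0(u+w)=u$ for $u\in U$, $w\in W$. It is a $\mathcal{C}^{(2)}$-module map, because the decomposition of $r(u+w)=ru+rw$ is unique and $U$, $W$ are submodules. It satisfies $\pi_0(V)=U$ and $\pi_0|_U=\mathrm{id}_U$. Here the hypothesis that a $\mathcal{C}^{(2)}$-complement exists is essential: in zero divisor linear algebra, submodules such as $Im\,1^{\#}_V$ need not have complements, so we cannot construct $\pi_0$ from a basis as in the vector space case.

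Next we average. Define
$$
\pi:=\frac{1}{|G|}\sum_{g\in G}\rho(g)\circ\pi_0\circ\rho(g)^{-1}.
$$
The scalar $\frac{1}{|G|}$ lies in $\mathcal{C}\subseteq\mathcal{C}^{(2)}$ and is invertible in $\mathcal{C}^{(2)}$, because its real part is nonzero. Each summand is a composition of $\mathcal{C}^{(2)}$-module maps, so $\pi$ is a $\mathcal{C}^{(2)}$-module map. We then check three properties.
\begin{description}
\item[(a)] $\pi(V)\subseteq U$. This holds because $\pi_0(V)\subseteq U$ and $\rho(g)(U)\subseteq U$.
\item[(b)] $\pi|_U=\mathrm{id}_U$. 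For $u\in U$ we have $\rho(g)^{-1}u\in U$, since $U$ is $G$-invariant and $\rho(g)^{-1}=\rho(g^{-1})$. So each summand sends $u$ to $\rho(g)\rho(g)^{-1}u=u$, and averaging gives $u$.
\item[(c)] $\rho(h)\circ\pi=\pi\circ\rho(h)$ for all $h\in G$. This follows by reindexing the sum with $g\mapsto hg$.
\end{description}

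Now set $\overline{U}:=Ker\,\pi$. It is a $\mathcal{C}^{(2)}$-submodule because $\pi$ is a module map. It is $G$-invariant by (c): if $\pi(x)=0$, then $\pi(\rho(h)x)=\rho(h)\pi(x)=0$. For the decomposition, write every $x\in V$ as $x=\pi(x)+(x-\pi(x))$. The first term lies in $U$ by (a). The second lies in $\overline{U}$, because (a) and (b) give $\pi^2=\pi$. If $x\in U\cap\overline{U}$, then $x=\pi(x)=0$ by (b). Hence $V=U\oplus\overline{U}$ as $G$-invariant $\mathcal{C}^{(2)}$-modules.

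We expect the main obstacle to be confirming that every step respects the $\mathcal{C}^{(2)}$-structure rather than just the complex-linear structure. Concretely, we must check two things. First, the projection $\pi_0$ commutes with multiplication by $1^{\#}$; this follows from the uniqueness of the decomposition in $U\oplus W$. Second, division by $|G|$ is legitimate; it is, because $|G|$ is an invertible scalar. Once these are checked, the rest is the standard computation. We also note that we never need the pleasantness notion of Definition \ref{def1.4} or a Hermitian-type inner product: the complement hypothesis in the statement replaces the role that Proposition \ref{pr1.5} would otherwise play.
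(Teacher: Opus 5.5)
Your proof is correct and follows the paper's approach: the paper's proof says only to repeat the proof of Maschke's theorem, and you carry out that averaging argument using the $\mathcal{C}^{(2)}$-linear projection $\pi_0$ that the complement hypothesis supplies. Your checks that $\pi_0$ and the averaged map $\pi$ are $\mathcal{C}^{(2)}$-module maps, and that $Ker\,\pi$ is a $G$-invariant complement of $U$, are the details the paper leaves implicit.
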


\medskip
\noindent
{\bf Proof} Use the same proof as the proof of Maschke’s theorem.

\hfill\raisebox{1mm}{\framebox[2mm]{}}

\bigskip
Unlike the ordinary representation of finite groups, there are two types of building blocks in the zero divisor complex 
representation  of finite groups.

\medskip
\begin{definition}\label{def4.2} Let $\rho$ be a zero divisor complex representation  of a finite group $G$ in a finite dimensional 
$\mathcal{C}^{(2)}$-module $V$, and let $W$ be a $G$-invariant  $\mathcal{C}^{(2)}$-submodule of $V$.
\begin{description}
\item[(i)] $W$ is  {\bf $2$-irreducible} or {\bf trvial  irreducible} if $W=Ker\,1^{\#}_W\ne \{0\}$ and the only 
$G$-invariant  $\mathcal{C}^{(2)}$-submodule of $W$ are $\{0\}$ and $W$.
\item[(ii)] $W$ is  {\bf $3$-irreducible} or {\bf non-trvial  irreducible} if $ Ker\,1^{\#}_W= Im\,1^{\#}_W\ne \{0\}$ and the only 
$G$-invariant  $\mathcal{C}^{(2)}$-submodule of $W$ are $\{0\}$, $Ker\,1^{\#}_W= Im\,1^{\#}_W$ and $W$.
\end{description}
\end{definition}

\medskip
The following proposition gives the fundamental fact in  zero divisor complex representation  of a finite groups.

\medskip
\begin{proposition}\label{pr4.2} If $\rho$ ia a zero divisor complex representation  of a finite group $G$ in a finite dimensional 
$\mathcal{C}^{(2)}$-module $V$, then $V$ is a direct sum of $k$- irreducible $\mathcal{C}^{(2)}$-submodule of $V$, where $k=2$ or $3$.
\end{proposition}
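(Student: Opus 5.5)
We argue by induction on the $\mathcal{C}^{(2)}$-dimension $(n,m)$ of $V$, ordered by the total size $2n+m$ of $V$ as a complex vector space. The only tool beyond linear algebra is Proposition \ref{pr4.1}: a $G$-invariant submodule that has some $\mathcal{C}^{(2)}$-complement has a $G$-invariant one. For $\mathcal{C}^{(2)}$ the analogue of Proposition \ref{pr1.4} characterizes the submodules with a complement: they are exactly the pleasant ones, i.e. $U\cap Im\,1^{\#}_V=Im\,1^{\#}_U$. The same proof works over any field $\mathcal{K}$, since only the existence of bases was used.

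We first treat the trivial part. Suppose $V\ne Ker\,1^{\#}_V$. Then we look for a $G$-invariant submodule $W\ne\{0\}$ with $Ker\,1^{\#}_W=Im\,1^{\#}_W$, and we build it as follows. The map $1^{\#}:V/Ker\,1^{\#}_V\to Im\,1^{\#}_V$ is a $G$-equivariant isomorphism of complex representations, because $\rho(g)$ commutes with $1^{\#}$. By ordinary Maschke, the $G$-module $V/Ker\,1^{\#}_V$ contains an irreducible complex $G$-submodule $\bar X$. Its image $1^{\#}\bar X$ is then an irreducible $G$-submodule of $Im\,1^{\#}_V$.

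The main obstacle is lifting $\bar X$ to a $G$-invariant complex subspace $X\subseteq V$ with $X\cap Ker\,1^{\#}_V=0$. For this we apply ordinary Maschke to the complex $G$-module $V$ and the invariant subspace $Ker\,1^{\#}_V$. This gives a $G$-invariant complex complement $C$ of $Ker\,1^{\#}_V$ in $V$; note that $C$ is not necessarily a $\mathcal{C}^{(2)}$-submodule. We take $X\subseteq C$ to be the preimage of $\bar X$. Then $W:=X+1^{\#}X=\mathcal{C}^{(2)}X$ is a $G$-invariant $\mathcal{C}^{(2)}$-submodule. Since $X\cap Ker\,1^{\#}_V=0$, we get $Ker\,1^{\#}_W=1^{\#}X=Im\,1^{\#}_W$.

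We next check the two properties needed of $W$.
\begin{itemize}
\item[(a)] $W$ is pleasant. If $x+1^{\#}x'\in Im\,1^{\#}_V$ with $x,x'\in X$, then applying $1^{\#}$ gives $1^{\#}x=0$. Hence $x=0$, so the element $1^{\#}x'$ lies in $Im\,1^{\#}_W$.
\item[(b)] $W$ is $3$-irreducible. Let $U\subseteq W$ be a $G$-invariant submodule with $U\not\subseteq 1^{\#}X$. Then $1^{\#}U$ is a nonzero $G$-submodule of the irreducible module $1^{\#}X$, so $1^{\#}U=1^{\#}X$. Now take any $x\in X$ and choose $u\in U$ with $1^{\#}u=1^{\#}x$. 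Then $u-x\in Ker\,1^{\#}_W=1^{\#}X$. Write $u-x=1^{\#}y$ with $y\in X$, and pick $u'\in U$ with $1^{\#}u'=1^{\#}y$. Then $x=u-u'\in U$, so $U=W$. Any $G$-invariant $U\subseteq 1^{\#}X$ is $0$ or $1^{\#}X$ by irreducibility.
\end{itemize}

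By (a) and Proposition \ref{pr4.1} we have $V=W\oplus\overline W$ with $\overline W$ $G$-invariant, and $\overline W$ has strictly smaller dimension. Induction applies to $\overline W$ with the restricted representation.

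Finally, suppose $V=Ker\,1^{\#}_V$. Then every complex subspace is a $\mathcal{C}^{(2)}$-submodule, and $V$ is just an ordinary complex representation of $G$. Ordinary Maschke decomposes it into irreducibles, and each of these is $2$-irreducible. This closes the induction.
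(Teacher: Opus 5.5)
Your proof is correct, but it is organized differently from the paper's. The paper first splits off the trivial part. Maschke inside $Ker\,1^{\#}_V$ gives $Ker\,1^{\#}_V=Im\,1^{\#}_V\oplus H$ with $H$ $G$-invariant. The free module $\bigoplus_i\mathcal{C}^{(2)}u_i^{(2)}$, built from lifts of a basis of $Im\,1^{\#}_V$, is an explicit $\mathcal{C}^{(2)}$-complement of $H$. Proposition~\ref{pr4.1} then yields a $G$-invariant complement $\overline H$ with $Ker\,1^{\#}_{\overline H}=Im\,1^{\#}_{\overline H}\neq 0$. $H$ splits into $2$-irreducibles by ordinary Maschke. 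For $\overline H$ the paper simply invokes ``Case 1'', where the decomposition of such a free module into $3$-irreducibles is asserted without argument. You instead split off one $3$-irreducible summand $\mathcal{C}^{(2)}X=X\oplus 1^{\#}X$ at a time, with $X$ an irreducible $G$-subspace of a $G$-invariant complex complement $C$ of $Ker\,1^{\#}_V$. Your checks that $Ker\,1^{\#}_W=Im\,1^{\#}_W=1^{\#}X$ and your step (b) supply exactly what the paper's Case 1 omits. So the paper's route is shorter only because its key step is left unproved.

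Two small repairs are needed. First, in (b), ``$x=u-u'$'' should read $x=u-1^{\#}u'$. More simply, $1^{\#}X=1^{\#}U\subseteq U$ because $U$ is a submodule, so $u-x\in U$.

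Second, the direction of Proposition~\ref{pr1.4} you use, pleasant $\Rightarrow$ complemented, is (i)$\Rightarrow$(ii). The paper proves only (ii)$\Rightarrow$(i), so ``the same proof works'' does not cover it. The implication is true, but you can bypass it entirely. By Maschke, choose $G$-invariant complements $Y$ of $X$ in $C$ and $Z$ of $Im\,1^{\#}_V$ in $Ker\,1^{\#}_V$. Since $Im\,1^{\#}_V=1^{\#}C=1^{\#}X\oplus 1^{\#}Y$, the submodule $Y\oplus 1^{\#}Y\oplus Z$ is already a $G$-invariant $\mathcal{C}^{(2)}$-complement of $W$. Doing this with full decompositions $C=\bigoplus_i X_i$ and $Z=\bigoplus_j Z_j$ into irreducibles gives
\[
V=\bigoplus_i\bigl(X_i\oplus 1^{\#}X_i\bigr)\oplus\bigoplus_j Z_j
\]
at once, with no induction and no use of Proposition~\ref{pr4.1}.
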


\medskip
\noindent
{\bf Proof} We prove Proposition \ref{pr4.1} by cases.

\bigskip
\underline{\underline{Case 1:  $Ker\,1^{\#}_V=Im\,1^{\#}_V\ne 0$}}, in which case, 
$V$ as the direct sum of $3$- irreducible $\mathcal{C}^{(2)}$-submodule of $V$
in Case 1. Hence, Proposition \ref{pr4.2} holds.

\bigskip
\underline{\underline{Case 2:  $Ker\,1^{\#}_V\ne Im\,1^{\#}_V$}}. If $Im\,1^{\#}_V=\{0\}$, then $V$ is 
a trivial $\mathcal{C}^{(2)}$-module. Hence, $\rho$ is an ordinary complex representation  of a finite 
group $G$ in a finite dimensional $\mathcal{C}$-space $V$, which implies that $V$ is a direct sum of
$2$- irreducible $\mathcal{C}$-submodule of $V$. 

\medskip
If $Im\,1^{\#}_V\ne \{0\}$, then $\rho|Ker\,1^{\#}_V$ is an ordinary complex representation  of a finite 
group $G$ in a finite dimensional $\mathcal{C}$-space $Ker\,1^{\#}_V$, and $Im\,1^{\#}_V$ is a 
$G$-invariant  $\mathcal{C}$-subspace of $Ker\,1^{\#}_V$. By Maschke's theorem, there exists a $G$-invariant  
$\mathcal{C}$-subspace $H$ of $Ker\,1^{\#}_V$ such that
$Ker\,1^{\#}_V=Im\,1^{\#}_V\oplus H$ as $G$-invariant  $\mathcal{C}$-spaces.
Let $\{1^{\#}u_i^{(2)}\}_{i=1}^p$ be a $\mathcal{C}$-basis of $Im\,1^{\#}_V$ and let 
$W=\displaystyle\bigoplus_{i=1}^p\,\mathcal{C}^{(2)}u_i^{(2)}$. Then
\begin{equation}\label{eq342}
V=H\oplus W
\quad\mbox{(as $\mathcal{C}^{(2)}$-modules).}
\end{equation}
It follows from Proposition \ref{pr4.1} and (\ref{eq342})  that there
exists a  $G$-invariant $\mathcal{C}^{(2)}$-submodule $\overline{H}$ such that 
\begin{equation}\label{eq343}
V=H\oplus  \overline{H}
\quad\mbox{(as $G$-invariant  $\mathcal{C}^{(2)}$-modules).}
\end{equation}
Since $ \overline{H}\stackrel{(\ref{eq343})}{\simeq}\displaystyle\frac{V}{H}
\stackrel{(\ref{eq342})}{\simeq}W$ as $\mathcal{C}^{(2)}$-modules and 
 $Ker\,1^{\#}_W=Im\,1^{\#}_W\ne 0$, we have 
$Ker\,1^{\#}_{\overline{H}}=Im\,1^{\#}_{\overline{H}}\ne 0$. By Case 1. we get
\begin{equation}\label{eq344}
\mbox{$\overline{H}$ is a direct sum of $3$- irreducible $\mathcal{C}^{(2)}$-submodule of $V$.}
\end{equation}
Since the ordinary representation $\rho|H$ of $G$   is complete reducible, we have
\begin{equation}\label{eq345}
\mbox{$H$ is a direct sum of $2$- irreducible $\mathcal{C}^{(2)}$-submodule of $V$.}
\end{equation}
By (\ref{eq343}), (\ref{eq344}) and (\ref{eq345}),  Proposition \ref{pr4.2} holds in Case 2.

\hfill\raisebox{1mm}{\framebox[2mm]{}}

\bigskip

\section{Appendix:\,\, A class of  representations of non-semisimple Lie algebras}

In this appendix, we introduce  the concept of  respecting nilpotent step representations of 
Lie algebras and give the counterpart of Ado's Theorem in the context of  respecting nilpotent 
step representations of Lie algebras. Throughout  this appendix,  all vector spaces are the vector 
spaces  over algebraically closed fields of  characteristic $0$. 

\medskip
Let $V=\displaystyle\bigoplus_{i=0}^n V_i$ be a finite dimensional graded vector space. As usual, we use 
$End(V)$ to denote the set of all linear transformations from $V$ to $V$. Let
$$
End_{\bigtriangleup}^i(V):=\left\{f \left | \mbox{$f\in End(V)$ and $f(V_k)\subseteq  
\displaystyle\bigoplus_{t=k}^{n-i} V_{t+i}$ for $0\le k\le n$}\right.\right\},
$$
where $0\le i\le n$ and $V_t:=0$ for $t\ge n+1$. 

\medskip
We say that a Lie algebra $L$ is a Lie algebra with the {\bf nilpotent step} $n+1$ if  the
lower central series of the nil radical $nil\,L$ of the Lie algebra $L$ is given  as follows:
$$
nil\,L\supset (nil\,L)^2\supset\cdots\supset (nil\,L)^n\supset (nil\,L)^{n+1}=0.
$$ 

\medskip
\begin{definition}\label{def1.1} Let $L$ be a finite-dimensional Lie algebra with the nilpotent step $n+1$. We say 
that $\varphi$ is a {\bf  respecting nilpotent step representation of the Lie algebra $L$} if there exists a finite-dimensional graded vector space  $V=\displaystyle\bigoplus_{i=0}^n V_i$ such that $\varphi: L\to  gl(V)$ is a Lie algebra homomorphism satisfying the following two conditions:
\begin{description} 
\item[(i)] $\varphi(L_{semi})\subseteq \{f\,|\, \mbox{$f\in End(V)$ and $f(V_k)\subseteq V_k$ for $k=0, 1, \dots, n$}\}$, 
where \linebreak $L_{semi}$ is a Levi factor of the Lie algebra $L$.
\item[(ii)] $\varphi\big((nil L)^i\big)\subseteq End_{\bigtriangleup}^i(V)$ for $1\le i\le n$.
\end{description}
\end{definition}

\medskip
The  counterpart of Ado's Theorem in the context of  respecting nilpotent step representations of Lie algebras is the following

\medskip
\begin{proposition}\label{pr5.1} Every finite-dimensional Lie algebra over an algebraically closed field of  characteristic $0$ has a faithful finite-dimensional respecting nilpotent step representation. 
\end{proposition}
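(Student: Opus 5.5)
The plan is to avoid redoing Ado's construction and instead reduce Proposition \ref{pr5.1} to a strong form of the classical Ado theorem. The idea is to manufacture the grading from a filtration adapted to the lower central series of $nil\,L$. Write $N:=nil\,L$, of nilpotent step $n+1$, so $N^{n+1}=0$, and fix a Levi decomposition $L=L_{semi}\oplus rad\,L$.

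\emph{Step 1.} I will invoke the Ado--Harish-Chandra form of Ado's theorem over a field of characteristic $0$. It gives a faithful finite-dimensional representation $\psi:L\to gl(U)$ in which every element of $N$ acts by a nilpotent operator. By Engel's theorem, the associative subalgebra $A\subseteq End(U)$ generated by $\psi(N)$ is then nilpotent, say $A^{r}=0$.

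\emph{Step 2.} For $k\ge 0$, define
$$W_k:=\sum \psi(N^{a_1})\psi(N^{a_2})\cdots\psi(N^{a_s})\,U,$$
where the sum runs over all $s\ge 0$ and all $a_1,\dots ,a_s\ge 1$ with $a_1+\cdots+a_s\ge k$, and where $W_0=U$.
\begin{itemize}
\item The sequence is a descending chain: $W_{k+1}\subseteq W_k$.
\item It terminates: $N^{a}\subseteq N$, so each product with $s\ge r$ factors vanishes. Since every $a_j\le n$ (as $N^{n+1}=0$), we get $W_k=0$ for $k\ge n(r-1)+1$.
\item Each $W_k$ is $\psi(L)$-stable. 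The key point is that every $N^{a}$ is an ideal of $L$, being characteristic in the ideal $N$. Hence for $y\in L$ we have $\psi(y)\psi(x_1)\cdots\psi(x_s)u=\sum_j \psi(x_1)\cdots\psi([y,x_j])\cdots\psi(x_s)u+\psi(x_1)\cdots\psi(x_s)\psi(y)u$, and each term on the right stays in $W_k$.
\item By construction, $\psi(N^{i})W_k\subseteq W_{k+i}$.
\end{itemize}

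\emph{Step 3.} Reindex so the top index is the paper's $n$. I set $V_k$, for $0\le k\le M$, to be an $L_{semi}$-stable complement of $W_{k+1}$ in $W_k$. Such complements exist by Weyl's complete reducibility theorem, since $L_{semi}$ is semisimple and $W_{k+1}\subseteq W_k$ are $L_{semi}$-submodules. Then $U=\bigoplus_{k=0}^{M}V_k$ and $W_k=\bigoplus_{t\ge k}V_t$.
\begin{itemize}
\item Condition (i) of Definition \ref{def1.1} holds, because $\psi(L_{semi})$ preserves each $V_k$.
\item Condition (ii) holds, because $\psi(N^{i})V_k\subseteq W_{k+i}=\bigoplus_{t\ge k+i}V_t$, which is exactly membership in $End_{\bigtriangleup}^i$.
\end{itemize}
The one bookkeeping issue is that the number of graded pieces $M+1$ may exceed $n+1$, while Definition \ref{def1.1} ties the grading length to the nilpotent step. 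I would handle this by coarsening. Group the $V_k$ into $n+1$ consecutive blocks $V'_j$ so that the required shift of $i$ is preserved in block index; the simplest choice is $V'_j=\bigoplus_{k\equiv j}V_k$ in suitable ranges. If that fails, I would replace the filtration by the weighted filtration with weights capped at $n$. Alternatively I would read the definition's $n$ as the grading length, since only the inclusions $(nil\,L)^i\mapsto End^i_{\bigtriangleup}$ for $1\le i\le n$ carry content, and $(nil\,L)^{i}=0$ for $i>n$ anyway.

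\emph{Step 4.} Faithfulness is inherited from $\psi$. If one prefers not to rely on the nilpotency refinement of Ado, there is a fallback. The adjoint representation, graded by $V_0=$ an $L_{semi}$-stable complement of $N$ in $L$ and $V_j=$ an $L_{semi}$-stable complement of $N^{j+1}$ in $N^{j}$, is already respecting, and its kernel is $Z(L)\subseteq N$. A direct sum, taken component-wise in the grading, with any respecting representation faithful on $Z(L)$ would then suffice. The main obstacle I expect is the grading-length matching in Step 3. Steps 1, 2 and 4 are routine given Ado's theorem and Weyl's theorem.
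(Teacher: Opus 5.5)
\textbf{There is a genuine gap, and it sits in your Step 3.} The grading length is not bookkeeping; it is the whole content of Proposition \ref{pr5.1}.

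In Definition \ref{def1.1} the graded space is $V=\bigoplus_{i=0}^n V_i$, with the same $n$ as in the nilpotent step $n+1$. Condition (ii) for $i=1$ forces every $\varphi(x)$, $x\in nil\,L$, to map $V_k$ into $\bigoplus_{t\ge k+1}V_t$. Hence any product of $n+1$ operators from $\varphi(nil\,L)$ is zero, i.e.\ the associative algebra $A$ generated by $\varphi(nil\,L)$ satisfies $A^{n+1}=0$. This is a property of the representation itself, independent of how you grade.

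The Ado--Harish-Chandra representation $\psi$ of Step 1 only gives $A^r=0$ for some $r$ you do not control. Whenever $A^{n+1}\neq 0$ for $\psi$, no grading of its space with $n+1$ pieces can satisfy (ii). So no regrouping of your $V_k$ can work. Your residue-class grouping even violates (ii) outright, since a component $V_k\to V_{k+1}$ with $k\equiv n$ lands in block $0$. Capping weights does not touch this obstruction either. Reading $n$ as a free grading length replaces the proposition by a much weaker statement. That weaker statement is exactly what your Steps 1--3 do establish: Ado--Harish-Chandra plus Weyl's theorem.

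\textbf{Your Step 4 reduction is sound and worth keeping.} Take the adjoint representation with $V_0$ an $L_{semi}$-stable complement of $N=nil\,L$ in $L$, and $V_j$ an $L_{semi}$-stable complement of $N^{j+1}$ in $N^j$. It has exactly $n+1$ pieces, satisfies (i) and (ii) because each $N^i$ is an ideal of $L$, and has kernel $Z(L)\subseteq N$.

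The second summand, however, is precisely the missing idea: a respecting representation with exactly $n+1$ pieces that is faithful on $Z(L)$. You propose to obtain it from Steps 1--3, which brings back the same obstruction. The bound has to be built in from the start.

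One way to do that is to work in $\mathcal{U}(N)$ with your weight filtration $F_k$, spanned by products $x_1\cdots x_s$ with $x_j\in N^{a_j}$ and $\sum a_j\ge k$. A PBW basis adapted to the lower central series gives $N\cap F_{n+1}=0$. So left multiplication on $\mathcal{U}(N)/F_{n+1}$ is faithful on $N$ and carries the right $(n+1)$-step grading. If $L=C\oplus N$ for a subalgebra $C\supseteq L_{semi}$, letting $C$ act by the derivations extending $ad|_N$ (these preserve every $F_k$) then gives a representation of $L$.

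When $L\to L/N$ does not split, this recipe fails to be a homomorphism by an $N$-valued $2$-cocycle. That obstruction must be removed without lengthening the grading. For example, take $L$ with basis $a_1,a_2,z,w_1,w_2$ and nonzero brackets $[a_1,a_2]=z$, $[a_1,w_1]=w_1$, $[a_2,w_2]=w_2$. Its nilradical is abelian, spanned by $z,w_1,w_2$, so $n=1$. It has no subalgebra complement to the nilradical. Its centre is spanned by $z\in[L,L]$, which the adjoint representation kills. Your proposal does not address this situation.
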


\bigskip
{\bf Acknowledgement} Most of the results in this paper were obtained while Professors Dale Rolfsen, George Bluman, and Brian Marcus served as Head of the Department of Mathematics; Professor Ed Perkins served as Associate Head for Faculty Affairs; and Professor Charles Lamb served as Undergraduate Chair. I am deeply grateful to them for their assistance and support. In particular, Professor Lamb’s outstanding leadership of the undergraduate teaching program created an environment that enabled me to devote substantial time to mathematical research. Finally, I am especially grateful to Professor R. V. Moody for his valuable guidance,  support, and encouragement.

\end{document}